\documentclass[reqno]{amsart}
\usepackage{amsfonts}

\setcounter{MaxMatrixCols}{10}

\newtheorem{theorem}{Theorem}
\theoremstyle{plain}

\newtheorem{axiom}{Axiom}

\newtheorem{conjecture}{Conjecture}
\newtheorem{corollary}{Corollary}

\newtheorem{definition}{Definition}
\newtheorem{example}{Example}
\newtheorem{exercise}{Exercise}
\newtheorem{lemma}{Lemma}

\newtheorem{problem}{Problem}
\newtheorem{proposition}{Proposition}
\newtheorem{remark}{Remark}

\numberwithin{equation}{section}
\numberwithin{theorem}{section}
\numberwithin{algorithm}{section}
\numberwithin{axiom}{section}
\numberwithin{case}{section}
\numberwithin{claim}{section}
\numberwithin{conclusion}{section}
\numberwithin{condition}{section}
\numberwithin{conjecture}{section}
\numberwithin{corollary}{section}
\numberwithin{criterion}{section}
\numberwithin{definition}{section}
\numberwithin{example}{section}
\numberwithin{exercise}{section}
\numberwithin{lemma}{section}
\numberwithin{notation}{section}
\numberwithin{problem}{section}
\numberwithin{proposition}{section}
\numberwithin{remark}{section}
\numberwithin{solution}{section}

\typeout{TCILATEX Macros for Scientific Word 3.0 <19 May 1997>.}
\typeout{NOTICE:  This macro file is NOT proprietary and may be 
freely copied and distributed.}
\makeatletter
%
\newcount\@hour\newcount\@minute\chardef\@x10\chardef\@xv60
\def\tcitime{
\def\@time{%
  \@minute\time\@hour\@minute\divide\@hour\@xv
  \ifnum\@hour<\@x 0\fi\the\@hour:%
  \multiply\@hour\@xv\advance\@minute-\@hour
  \ifnum\@minute<\@x 0\fi\the\@minute
  }}%

\@ifundefined{hyperref}{}{}

\@ifundefined{qExtProgCall}{\def\qExtProgCall#1#2#3#4#5#6{\relax}}{}
%
%
%
%
\def\QCTOpt[#1]#2{%
  \def\QCTOptB{#1}
  \def\QCTOptA{#2}
}
\def\QCTNOpt#1{%
  \def\QCTOptA{#1}
  \let\QCTOptB\empty
}
\def\Qct{%
  \@ifnextchar[{%
    \QCTOpt}{\QCTNOpt}
}
\def\QCBOpt[#1]#2{%
  \def\QCBOptB{#1}
  \def\QCBOptA{#2}
}
\def\QCBNOpt#1{%
  \def\QCBOptA{#1}
  \let\QCBOptB\empty
}
\def\Qcb{%
  \@ifnextchar[{%
    \QCBOpt}{\QCBNOpt}
}
\def\PrepCapArgs{%
  \ifx\QCBOptA\empty
    \ifx\QCTOptA\empty
      {}%
    \else
      \ifx\QCTOptB\empty
        {\QCTOptA}%
      \else
        [\QCTOptB]{\QCTOptA}%
      \fi
    \fi
  \else
    \ifx\QCBOptA\empty
      {}%
    \else
      \ifx\QCBOptB\empty
        {\QCBOptA}%
      \else
        [\QCBOptB]{\QCBOptA}%
      \fi
    \fi
  \fi
}
\newcount\GRAPHICSTYPE
\GRAPHICSTYPE=\z@
\def\GRAPHICSPS#1{%
 \ifcase\GRAPHICSTYPE
   \special{ps: #1}%
 \or
   \special{language "PS", include "#1"}%
 \fi
}%
%
%
%
\def\graffile#1#2#3#4{%
    \bgroup
    \leavevmode
    \@ifundefined{bbl@deactivate}{\def~{\string~}}{\activesoff}
    \raise -#4 \BOXTHEFRAME{%
        \hbox to #2{\raise #3\hbox to #2{\null #1\hfil}}}%
    \egroup
}%
%
\def\draftbox#1#2#3#4{%
 \leavevmode\raise -#4 \hbox{%
  \frame{\rlap{\protect\tiny #1}\hbox to #2%
   {\vrule height#3 width\z@ depth\z@\hfil}%
  }%
 }%
}%
\newcount\draft
\draft=\z@

\newif\ifwasdraft
\wasdraftfalse

\def\GRAPHIC#1#2#3#4#5{%
 \ifnum\draft=\@ne\draftbox{#2}{#3}{#4}{#5}%
  \else\graffile{#1}{#3}{#4}{#5}%
  \fi
 }%
\def\addtoLaTeXparams#1{%
    \edef\LaTeXparams{\LaTeXparams #1}}%
%

\newif\ifBoxFrame \BoxFramefalse
\newif\ifOverFrame \OverFramefalse
\newif\ifUnderFrame \UnderFramefalse

\def\BOXTHEFRAME#1{%
   \hbox{%
      \ifBoxFrame
         \frame{#1}%
      \else
         {#1}%
      \fi
   }%
}

\def\doFRAMEparams#1{\BoxFramefalse\OverFramefalse\UnderFramefalse\readFRAMEparams#1\end}%
\def\readFRAMEparams#1{%
 \ifx#1\end%
  \let\next=\relax
  \else
  \ifx#1i\dispkind=\z@\fi
  \ifx#1d\dispkind=\@ne\fi
  \ifx#1f\dispkind=\tw@\fi
  \ifx#1t\addtoLaTeXparams{t}\fi
  \ifx#1b\addtoLaTeXparams{b}\fi
  \ifx#1p\addtoLaTeXparams{p}\fi
  \ifx#1h\addtoLaTeXparams{h}\fi
  \ifx#1X\BoxFrametrue\fi
  \ifx#1O\OverFrametrue\fi
  \ifx#1U\UnderFrametrue\fi
  \ifx#1w
    \ifnum\draft=1\wasdrafttrue\else\wasdraftfalse\fi
    \draft=\@ne
  \fi
  \let\next=\readFRAMEparams
  \fi
 \next
 }%
%

\def\IFRAME#1#2#3#4#5#6{%
      \bgroup
      \let\QCTOptA\empty
      \let\QCTOptB\empty
      \let\QCBOptA\empty
      \let\QCBOptB\empty
      #6%
      \parindent=0pt%
      \leftskip=0pt
      \rightskip=0pt
      \setbox0 = \hbox{\QCBOptA}%
      \@tempdima = #1\relax
      \ifOverFrame
          \typeout{This is not implemented yet}%
          \show\HELP
      \else
         \ifdim\wd0>\@tempdima
            \advance\@tempdima by \@tempdima
            \ifdim\wd0 >\@tempdima
               \textwidth=\@tempdima
               \setbox1 =\vbox{%
                  \noindent\hbox to \@tempdima{\hfill\GRAPHIC{#5}{#4}{#1}{#2}{#3}\hfill}\\%
                  \noindent\hbox to \@tempdima{\parbox[b]{\@tempdima}{\QCBOptA}}%
               }%
               \wd1=\@tempdima
            \else
               \textwidth=\wd0
               \setbox1 =\vbox{%
                 \noindent\hbox to \wd0{\hfill\GRAPHIC{#5}{#4}{#1}{#2}{#3}\hfill}\\%
                 \noindent\hbox{\QCBOptA}%
               }%
               \wd1=\wd0
            \fi
         \else
            \ifdim\wd0>0pt
              \hsize=\@tempdima
              \setbox1 =\vbox{%
                \unskip\GRAPHIC{#5}{#4}{#1}{#2}{0pt}%
                \break
                \unskip\hbox to \@tempdima{\hfill \QCBOptA\hfill}%
              }%
              \wd1=\@tempdima
           \else
              \hsize=\@tempdima
              \setbox1 =\vbox{%
                \unskip\GRAPHIC{#5}{#4}{#1}{#2}{0pt}%
              }%
              \wd1=\@tempdima
           \fi
         \fi
         \@tempdimb=\ht1
         \advance\@tempdimb by \dp1
         \advance\@tempdimb by -#2%
         \advance\@tempdimb by #3%
         \leavevmode
         \raise -\@tempdimb \hbox{\box1}%
      \fi
      \egroup%
}%
%
\def\DFRAME#1#2#3#4#5{%
 \begin{center}
     \let\QCTOptA\empty
     \let\QCTOptB\empty
     \let\QCBOptA\empty
     \let\QCBOptB\empty
     \ifOverFrame 
        #5\QCTOptA\par
     \fi
     \GRAPHIC{#4}{#3}{#1}{#2}{\z@}
     \ifUnderFrame 
        \nobreak\par\nobreak#5\QCBOptA
     \fi
 \end{center}%
 }%
%
\def\FFRAME#1#2#3#4#5#6#7{%
 \begin{figure}[#1]%
  \let\QCTOptA\empty
  \let\QCTOptB\empty
  \let\QCBOptA\empty
  \let\QCBOptB\empty
  \ifOverFrame
    #4
    \ifx\QCTOptA\empty
    \else
      \ifx\QCTOptB\empty
        \caption{\QCTOptA}%
      \else
        \caption[\QCTOptB]{\QCTOptA}%
      \fi
    \fi
    \ifUnderFrame\else
      \label{#5}%
    \fi
  \else
    \UnderFrametrue%
  \fi
  \begin{center}\GRAPHIC{#7}{#6}{#2}{#3}{\z@}\end{center}%
  \ifUnderFrame
    #4
    \ifx\QCBOptA\empty
      \caption{}%
    \else
      \ifx\QCBOptB\empty
        \caption{\QCBOptA}%
      \else
        \caption[\QCBOptB]{\QCBOptA}%
      \fi
    \fi
    \label{#5}%
  \fi
  \end{figure}%
 }%
%
%
%
%
%
\newcount\dispkind%

\def\makeactives{
  \catcode`\"=\active
  \catcode`\;=\active
  \catcode`\:=\active
  \catcode`\'=\active
  \catcode`\~=\active
}
\bgroup
   \makeactives
   \gdef\activesoff{%
      \def"{\string"}
      \def;{\string;}
      \def:{\string:}
      \def'{\string'}
      \def~{\string~}
    }
\egroup

\def\FRAME#1#2#3#4#5#6#7#8{%
 \bgroup
 \ifnum\draft=\@ne
   \wasdrafttrue
 \else
   \wasdraftfalse%
 \fi
 \def\LaTeXparams{}%
 \dispkind=\z@
 \def\LaTeXparams{}%
 \doFRAMEparams{#1}%
 \ifnum\dispkind=\z@\IFRAME{#2}{#3}{#4}{#7}{#8}{#5}\else
  \ifnum\dispkind=\@ne\DFRAME{#2}{#3}{#7}{#8}{#5}\else
   \ifnum\dispkind=\tw@
    \edef\@tempa{\noexpand\FFRAME{\LaTeXparams}}%
    \@tempa{#2}{#3}{#5}{#6}{#7}{#8}%
    \fi
   \fi
  \fi
  \ifwasdraft\draft=1\else\draft=0\fi{}%
  \egroup
 }%
%

\def\TEXUX#1{"texux"}

%
%
%
%
%
%
%
%
\def\func#1{\mathop{\rm #1}\nolimits}%
%

%
\long\def\QQQ#1#2{%
     \long\expandafter\def\csname#1\endcsname{#2}}%
\@ifundefined{QTP}{\def\QTP#1{}}{}
\@ifundefined{QEXCLUDE}{\def\QEXCLUDE#1{}}{}
\@ifundefined{Qlb}{}{}
\@ifundefined{Qlt}{}{}
\long\def\QQA#1#2{}%
\def\QTR#1#2{{\csname#1\endcsname #2}}
\def\EXPAND#1[#2]#3{}%
\def\NOEXPAND#1[#2]#3{}%
\def\LaTeXparent#1{}%
\def\ChildStyles#1{}%
\def\ChildDefaults#1{}%
\def\QTagDef#1#2#3{}%

\@ifundefined{correctchoice}{}{}
\@ifundefined{HTML}{\def\HTML#1{\relax}}{}
\@ifundefined{TCIIcon}{\def\TCIIcon#1#2#3#4{\relax}}{}
\if@compatibility
  \typeout{Not defining UNICODE or CustomNote commands for LaTeX 2.09.}
\else
  \providecommand{\UNICODE}[2][]{}
  
\fi

%
\@ifundefined{StyleEditBeginDoc}{}{}
%
\def\QQfnmark#1{\footnotemark}

%
%
\@ifundefined{TCIMAKEINDEX}{}{\makeindex}%
%
\@ifundefined{abstract}{%
 \def\abstract{%
  \if@twocolumn
   \section*{Abstract (Not appropriate in this style!)}%
   \else \small 
   \begin{center}{\bf Abstract\vspace{-.5em}\vspace{\z@}}\end{center}%
   \quotation 
   \fi
  }%
 }{%
 }%
\@ifundefined{endabstract}{\def\endabstract
  {\if@twocolumn\else\endquotation\fi}}{}%
\@ifundefined{maketitle}{\def\maketitle#1{}}{}%
\@ifundefined{affiliation}{\def\affiliation#1{}}{}%
\@ifundefined{proof}{}{}%
\@ifundefined{endproof}{}{}%
\@ifundefined{newfield}{\def\newfield#1#2{}}{}%
\@ifundefined{chapter}{\def\chapter#1{\par(Chapter head:)#1\par }%
 \newcount\c@chapter}{}%
\@ifundefined{part}{\def\part#1{\par(Part head:)#1\par }}{}%
\@ifundefined{section}{\def\section#1{\par(Section head:)#1\par }}{}%
\@ifundefined{subsection}{\def\subsection#1%
 {\par(Subsection head:)#1\par }}{}%
\@ifundefined{subsubsection}{\def\subsubsection#1%
 {\par(Subsubsection head:)#1\par }}{}%
\@ifundefined{paragraph}{\def\paragraph#1%
 {\par(Subsubsubsection head:)#1\par }}{}%
\@ifundefined{subparagraph}{\def\subparagraph#1%
 {\par(Subsubsubsubsection head:)#1\par }}{}%
\@ifundefined{therefore}{}{}%
\@ifundefined{backepsilon}{}{}%
\@ifundefined{yen}{}{}%
\@ifundefined{registered}{%
   \def\registered{\relax\ifmmode{}\r@gistered
                    \else$\m@th\r@gistered$\fi}%
 \def\r@gistered{^{\ooalign
  {\hfil\raise.07ex\hbox{$\scriptstyle\rm\text{R}$}\hfil\crcr
  \mathhexbox20D}}}}{}%
\@ifundefined{Eth}{}{}%
\@ifundefined{eth}{}{}%
\@ifundefined{Thorn}{}{}%
\@ifundefined{thorn}{}{}%
%
\@ifundefined{degree}{}{}%
%
\newdimen\theight
\def\Column{%
 \vadjust{\setbox\z@=\hbox{\scriptsize\quad\quad tcol}%
  \theight=\ht\z@\advance\theight by \dp\z@\advance\theight by \lineskip
  \kern -\theight \vbox to \theight{%
   \rightline{\rlap{\box\z@}}%
   \vss
   }%
  }%
 }%
\def\qed{%
 \ifhmode\unskip\nobreak\fi\ifmmode\ifinner\else\hskip5\p@\fi\fi
 \hbox{\hskip5\p@\vrule width4\p@ height6\p@ depth1.5\p@\hskip\p@}%
 }%
\def\miss{\hbox{\vrule height2\p@ width 2\p@ depth\z@}}%
%
%
\def\tcol#1{{\baselineskip=6\p@ \vcenter{#1}} \Column}  %
%
%
\@ifundefined{note}{}{}%

\def\newfmtname{LaTeX2e}

\ifx\fmtname\newfmtname
  \DeclareOldFontCommand{\rm}{\normalfont\rmfamily}{\mathrm}
  \DeclareOldFontCommand{\sf}{\normalfont\sffamily}{\mathsf}
  \DeclareOldFontCommand{\tt}{\normalfont\ttfamily}{\mathtt}
  \DeclareOldFontCommand{\bf}{\normalfont\bfseries}{\mathbf}
  \DeclareOldFontCommand{\it}{\normalfont\itshape}{\mathit}
  \DeclareOldFontCommand{\sl}{\normalfont\slshape}{\@nomath\sl}
  \DeclareOldFontCommand{\sc}{\normalfont\scshape}{\@nomath\sc}
\fi

%

\def\alpha{{\Greekmath 010B}}%
\def\beta{{\Greekmath 010C}}%
\def\gamma{{\Greekmath 010D}}%
\def\delta{{\Greekmath 010E}}%
\def\epsilon{{\Greekmath 010F}}%
\def\zeta{{\Greekmath 0110}}%
\def\eta{{\Greekmath 0111}}%
\def\theta{{\Greekmath 0112}}%
\def\iota{{\Greekmath 0113}}%
\def\kappa{{\Greekmath 0114}}%
\def\lambda{{\Greekmath 0115}}%
\def\mu{{\Greekmath 0116}}%
\def\nu{{\Greekmath 0117}}%
\def\xi{{\Greekmath 0118}}%
\def\pi{{\Greekmath 0119}}%
\def\rho{{\Greekmath 011A}}%
\def\sigma{{\Greekmath 011B}}%
\def\tau{{\Greekmath 011C}}%
\def\upsilon{{\Greekmath 011D}}%
\def\phi{{\Greekmath 011E}}%
\def\chi{{\Greekmath 011F}}%
\def\psi{{\Greekmath 0120}}%
\def\omega{{\Greekmath 0121}}%
\def\varepsilon{{\Greekmath 0122}}%
\def\vartheta{{\Greekmath 0123}}%
\def\varpi{{\Greekmath 0124}}%
\def\varrho{{\Greekmath 0125}}%
\def\varsigma{{\Greekmath 0126}}%
\def\varphi{{\Greekmath 0127}}%

\def\nabla{{\Greekmath 0272}}
\def\FindBoldGroup{%
   {\setbox0=\hbox{$\mathbf{x\global\edef\theboldgroup{\the\mathgroup}}$}}%
}

\def\Greekmath#1#2#3#4{%
    \if@compatibility
        \ifnum\mathgroup=\symbold
           \mathchoice{\mbox{\boldmath$\displaystyle\mathchar"#1#2#3#4$}}%
                      {\mbox{\boldmath$\textstyle\mathchar"#1#2#3#4$}}%
                      {\mbox{\boldmath$\scriptstyle\mathchar"#1#2#3#4$}}%
                      {\mbox{\boldmath$\scriptscriptstyle\mathchar"#1#2#3#4$}}%
        \else
           \mathchar"#1#2#3#4%
        \fi 
    \else 
        \FindBoldGroup
        \ifnum\mathgroup=\theboldgroup 
           \mathchoice{\mbox{\boldmath$\displaystyle\mathchar"#1#2#3#4$}}%
                      {\mbox{\boldmath$\textstyle\mathchar"#1#2#3#4$}}%
                      {\mbox{\boldmath$\scriptstyle\mathchar"#1#2#3#4$}}%
                      {\mbox{\boldmath$\scriptscriptstyle\mathchar"#1#2#3#4$}}%
        \else
           \mathchar"#1#2#3#4%
        \fi     	    
	  \fi}

\newif\ifGreekBold  \GreekBoldfalse
\let\SAVEPBF=\pbf
\def\pbf{\GreekBoldtrue\SAVEPBF}%

\@ifundefined{theorem}{\newtheorem{theorem}{Theorem}}{}
\@ifundefined{lemma}{\newtheorem{lemma}[theorem]{Lemma}}{}
\@ifundefined{corollary}{\newtheorem{corollary}[theorem]{Corollary}}{}
\@ifundefined{conjecture}{}{}
\@ifundefined{proposition}{\newtheorem{proposition}[theorem]{Proposition}}{}
\@ifundefined{axiom}{}{}
\@ifundefined{remark}{}{}
\@ifundefined{example}{}{}
\@ifundefined{exercise}{}{}
\@ifundefined{definition}{}{}

\@ifundefined{mathletters}{%
  \newcounter{equationnumber}  
  \def\mathletters{%
     \addtocounter{equation}{1}
     \edef\@currentlabel{\theequation}%
     \setcounter{equationnumber}{\c@equation}
     \setcounter{equation}{0}%
     \edef\theequation{\@currentlabel\noexpand\alph{equation}}%
  }
  
}{}

\@ifundefined{BibTeX}{%
    \def\BibTeX{{\rm B\kern-.05em{\sc i\kern-.025em b}\kern-.08em
                 T\kern-.1667em\lower.7ex\hbox{E}\kern-.125emX}}}{}%
\@ifundefined{AmS}%
    {\def\AmS{{\protect\usefont{OMS}{cmsy}{m}{n}%
                A\kern-.1667em\lower.5ex\hbox{M}\kern-.125emS}}}{}%
\@ifundefined{AmSTeX}{}{}%
%

\def\@@eqncr{\let\@tempa\relax
    \ifcase\@eqcnt \def\@tempa{& & &}\or \def\@tempa{& &}%
      \else \def\@tempa{&}\fi
     \@tempa
     \if@eqnsw
        \iftag@
           \@taggnum
        \else
           \@eqnnum\stepcounter{equation}%
        \fi
     \fi
     \global\tag@false
     \global\@eqnswtrue
     \global\@eqcnt\z@\cr}

\def\TCItag{\@ifnextchar*{\@TCItagstar}{\@TCItag}}
\def\@TCItag#1{%
    \global\tag@true
    \global\def\@taggnum{(#1)}}
\def\@TCItagstar*#1{%
    \global\tag@true
    \global\def\@taggnum{#1}}
%
%
%
%
%
%
%
%
%
%
%
%
%
%
%
%
%
%
%
%
%
%
%
%
%
%
%
%
%
%
%
%
%
%
%
%
%
%
%
%
%
%
%
%
%
%
%
%
%
%
%
%
%
%
%
%
%
%
%
%
%
%
%

%
%
\ifx\ds@amstex\relax
   \message{amstex already loaded}\makeatother 
\else
   \@ifpackageloaded{amsmath}%
      {\message{amsmath already loaded}\makeatother }
      {}
   \@ifpackageloaded{amstex}%
      {\message{amstex already loaded}\makeatother }
      {}
   \@ifpackageloaded{amsgen}%
      {\message{amsgen already loaded}\makeatother }
      {}
\fi
%
%
%
%
\let\DOTSI\relax
\def\RIfM@{\relax\ifmmode}%
\def\FN@{\futurelet\next}%
\newcount\intno@
\def\iint{\DOTSI\intno@\tw@\FN@\ints@}%
\def\iiint{\DOTSI\intno@\thr@@\FN@\ints@}%
\def\iiiint{\DOTSI\intno@4 \FN@\ints@}%
\def\idotsint{\DOTSI\intno@\z@\FN@\ints@}%
\def\ints@{\findlimits@\ints@@}%
\newif\iflimtoken@
\newif\iflimits@
\def\findlimits@{\limtoken@true\ifx\next\limits\limits@true
 \else\ifx\next\nolimits\limits@false\else
 \limtoken@false\ifx\ilimits@\nolimits\limits@false\else
 \ifinner\limits@false\else\limits@true\fi\fi\fi\fi}%
\def\multint@{\int\ifnum\intno@=\z@\intdots@                          
 \else\intkern@\fi                                                    
 \ifnum\intno@>\tw@\int\intkern@\fi                                   
 \ifnum\intno@>\thr@@\int\intkern@\fi                                 
 \int}
\def\multintlimits@{\intop\ifnum\intno@=\z@\intdots@\else\intkern@\fi
 \ifnum\intno@>\tw@\intop\intkern@\fi
 \ifnum\intno@>\thr@@\intop\intkern@\fi\intop}%
\def\intic@{%
    \mathchoice{\hskip.5em}{\hskip.4em}{\hskip.4em}{\hskip.4em}}%
\def\negintic@{\mathchoice
 {\hskip-.5em}{\hskip-.4em}{\hskip-.4em}{\hskip-.4em}}%
\def\ints@@{\iflimtoken@                                              
 \def\ints@@@{\iflimits@\negintic@
   \mathop{\intic@\multintlimits@}\limits                             
  \else\multint@\nolimits\fi                                          
  \eat@}
 \else                                                                
 \def\ints@@@{\iflimits@\negintic@
  \mathop{\intic@\multintlimits@}\limits\else
  \multint@\nolimits\fi}\fi\ints@@@}%
\def\intkern@{\mathchoice{\!\!\!}{\!\!}{\!\!}{\!\!}}%
\def\plaincdots@{\mathinner{\cdotp\cdotp\cdotp}}%
\def\intdots@{\mathchoice{\plaincdots@}%
 {{\cdotp}\mkern1.5mu{\cdotp}\mkern1.5mu{\cdotp}}%
 {{\cdotp}\mkern1mu{\cdotp}\mkern1mu{\cdotp}}%
 {{\cdotp}\mkern1mu{\cdotp}\mkern1mu{\cdotp}}}%
%
%
%
\def\RIfM@{\relax\protect\ifmmode}
\def\text{\RIfM@\expandafter\text@\else\expandafter\mbox\fi}
\let\nfss@text\text
\def\text@#1{\mathchoice
   {\textdef@\displaystyle\f@size{#1}}%
   {\textdef@\textstyle\tf@size{\firstchoice@false #1}}%
   {\textdef@\textstyle\sf@size{\firstchoice@false #1}}%
   {\textdef@\textstyle \ssf@size{\firstchoice@false #1}}%
   \glb@settings}

\def\textdef@#1#2#3{\hbox{{%
                    \everymath{#1}%
                    \let\f@size#2\selectfont
                    #3}}}
\newif\iffirstchoice@
\firstchoice@true
%
%
\def\Let@{\relax\iffalse{\fi\let\\=\cr\iffalse}\fi}%
\def\vspace@{\def\vspace##1{\crcr\noalign{\vskip##1\relax}}}%
\def\multilimits@{\bgroup\vspace@\Let@
 \baselineskip\fontdimen10 \scriptfont\tw@
 \advance\baselineskip\fontdimen12 \scriptfont\tw@
 \lineskip\thr@@\fontdimen8 \scriptfont\thr@@
 \lineskiplimit\lineskip
 \vbox\bgroup\ialign\bgroup\hfil$\m@th\scriptstyle{##}$\hfil\crcr}%
\def\Sb{_\multilimits@}%
\def\endSb{\crcr\egroup\egroup\egroup}%
\def\Sp{^\multilimits@}%

%
%
%
\newdimen\ex@
\ex@.2326ex
\def\rightarrowfill@#1{$#1\m@th\mathord-\mkern-6mu\cleaders
 \hbox{$#1\mkern-2mu\mathord-\mkern-2mu$}\hfill
 \mkern-6mu\mathord\rightarrow$}%
\def\leftarrowfill@#1{$#1\m@th\mathord\leftarrow\mkern-6mu\cleaders
 \hbox{$#1\mkern-2mu\mathord-\mkern-2mu$}\hfill\mkern-6mu\mathord-$}%
\def\leftrightarrowfill@#1{$#1\m@th\mathord\leftarrow
\mkern-6mu\cleaders
 \hbox{$#1\mkern-2mu\mathord-\mkern-2mu$}\hfill
 \mkern-6mu\mathord\rightarrow$}%
\def\overrightarrow{\mathpalette\overrightarrow@}%
\def\overrightarrow@#1#2{\vbox{\ialign{##\crcr\rightarrowfill@#1\crcr
 \noalign{\kern-\ex@\nointerlineskip}$\m@th\hfil#1#2\hfil$\crcr}}}%

\def\overleftarrow{\mathpalette\overleftarrow@}%
\def\overleftarrow@#1#2{\vbox{\ialign{##\crcr\leftarrowfill@#1\crcr
 \noalign{\kern-\ex@\nointerlineskip}$\m@th\hfil#1#2\hfil$\crcr}}}%
\def\overleftrightarrow{\mathpalette\overleftrightarrow@}%
\def\overleftrightarrow@#1#2{\vbox{\ialign{##\crcr
   \leftrightarrowfill@#1\crcr
 \noalign{\kern-\ex@\nointerlineskip}$\m@th\hfil#1#2\hfil$\crcr}}}%
\def\underrightarrow{\mathpalette\underrightarrow@}%
\def\underrightarrow@#1#2{\vtop{\ialign{##\crcr$\m@th\hfil#1#2\hfil
  $\crcr\noalign{\nointerlineskip}\rightarrowfill@#1\crcr}}}%

\def\underleftarrow{\mathpalette\underleftarrow@}%
\def\underleftarrow@#1#2{\vtop{\ialign{##\crcr$\m@th\hfil#1#2\hfil
  $\crcr\noalign{\nointerlineskip}\leftarrowfill@#1\crcr}}}%
\def\underleftrightarrow{\mathpalette\underleftrightarrow@}%
\def\underleftrightarrow@#1#2{\vtop{\ialign{##\crcr$\m@th
  \hfil#1#2\hfil$\crcr
 \noalign{\nointerlineskip}\leftrightarrowfill@#1\crcr}}}%

\def\qopnamewl@#1{\mathop{\operator@font#1}\nlimits@}
\let\nlimits@\displaylimits
\def\setboxz@h{\setbox\z@\hbox}

\def\varlim@#1#2{\mathop{\vtop{\ialign{##\crcr
 \hfil$#1\m@th\operator@font lim$\hfil\crcr
 \noalign{\nointerlineskip}#2#1\crcr
 \noalign{\nointerlineskip\kern-\ex@}\crcr}}}}

 \def\rightarrowfill@#1{\m@th\setboxz@h{$#1-$}\ht\z@\z@
  $#1\copy\z@\mkern-6mu\cleaders
  \hbox{$#1\mkern-2mu\box\z@\mkern-2mu$}\hfill
  \mkern-6mu\mathord\rightarrow$}
\def\leftarrowfill@#1{\m@th\setboxz@h{$#1-$}\ht\z@\z@
  $#1\mathord\leftarrow\mkern-6mu\cleaders
  \hbox{$#1\mkern-2mu\copy\z@\mkern-2mu$}\hfill
  \mkern-6mu\box\z@$}

\def\projlim{\qopnamewl@{proj\,lim}}
\def\injlim{\qopnamewl@{inj\,lim}}
\def\varinjlim{\mathpalette\varlim@\rightarrowfill@}
\def\varprojlim{\mathpalette\varlim@\leftarrowfill@}
\def\varliminf{\mathpalette\varliminf@{}}
\def\varliminf@#1{\mathop{\underline{\vrule\@depth.2\ex@\@width\z@
   \hbox{$#1\m@th\operator@font lim$}}}}
\def\varlimsup{\mathpalette\varlimsup@{}}
\def\varlimsup@#1{\mathop{\overline
  {\hbox{$#1\m@th\operator@font lim$}}}}

%
%
%
%
%
%
\begingroup \catcode `|=0 \catcode `[= 1
\catcode`]=2 \catcode `\{=12 \catcode `\}=12
\catcode`\\=12 
|gdef|@alignverbatim#1\end{align}[#1|end[align]]
|gdef|@salignverbatim#1\end{align*}[#1|end[align*]]

|gdef|@alignatverbatim#1\end{alignat}[#1|end[alignat]]
|gdef|@salignatverbatim#1\end{alignat*}[#1|end[alignat*]]

|gdef|@xalignatverbatim#1\end{xalignat}[#1|end[xalignat]]
|gdef|@sxalignatverbatim#1\end{xalignat*}[#1|end[xalignat*]]

|gdef|@gatherverbatim#1\end{gather}[#1|end[gather]]
|gdef|@sgatherverbatim#1\end{gather*}[#1|end[gather*]]

|gdef|@gatherverbatim#1\end{gather}[#1|end[gather]]
|gdef|@sgatherverbatim#1\end{gather*}[#1|end[gather*]]

|gdef|@multilineverbatim#1\end{multiline}[#1|end[multiline]]
|gdef|@smultilineverbatim#1\end{multiline*}[#1|end[multiline*]]

|gdef|@arraxverbatim#1\end{arrax}[#1|end[arrax]]
|gdef|@sarraxverbatim#1\end{arrax*}[#1|end[arrax*]]

|gdef|@tabulaxverbatim#1\end{tabulax}[#1|end[tabulax]]
|gdef|@stabulaxverbatim#1\end{tabulax*}[#1|end[tabulax*]]

|endgroup

\def\align{\@verbatim \frenchspacing\@vobeyspaces \@alignverbatim
You are using the "align" environment in a style in which it is not defined.}

\@namedef{align*}{\@verbatim\@salignverbatim
You are using the "align*" environment in a style in which it is not defined.}
\expandafter\let\csname endalign*\endcsname =\endtrivlist

\def\alignat{\@verbatim \frenchspacing\@vobeyspaces \@alignatverbatim
You are using the "alignat" environment in a style in which it is not defined.}

\@namedef{alignat*}{\@verbatim\@salignatverbatim
You are using the "alignat*" environment in a style in which it is not defined.}
\expandafter\let\csname endalignat*\endcsname =\endtrivlist

\def\xalignat{\@verbatim \frenchspacing\@vobeyspaces \@xalignatverbatim
You are using the "xalignat" environment in a style in which it is not defined.}

\@namedef{xalignat*}{\@verbatim\@sxalignatverbatim
You are using the "xalignat*" environment in a style in which it is not defined.}
\expandafter\let\csname endxalignat*\endcsname =\endtrivlist

\def\gather{\@verbatim \frenchspacing\@vobeyspaces \@gatherverbatim
You are using the "gather" environment in a style in which it is not defined.}

\@namedef{gather*}{\@verbatim\@sgatherverbatim
You are using the "gather*" environment in a style in which it is not defined.}
\expandafter\let\csname endgather*\endcsname =\endtrivlist

\def\multiline{\@verbatim \frenchspacing\@vobeyspaces \@multilineverbatim
You are using the "multiline" environment in a style in which it is not defined.}

\@namedef{multiline*}{\@verbatim\@smultilineverbatim
You are using the "multiline*" environment in a style in which it is not defined.}
\expandafter\let\csname endmultiline*\endcsname =\endtrivlist

\def\arrax{\@verbatim \frenchspacing\@vobeyspaces \@arraxverbatim
You are using a type of "array" construct that is only allowed in AmS-LaTeX.}

\def\tabulax{\@verbatim \frenchspacing\@vobeyspaces \@tabulaxverbatim
You are using a type of "tabular" construct that is only allowed in AmS-LaTeX.}

\@namedef{arrax*}{\@verbatim\@sarraxverbatim
You are using a type of "array*" construct that is only allowed in AmS-LaTeX.}
\expandafter\let\csname endarrax*\endcsname =\endtrivlist

\@namedef{tabulax*}{\@verbatim\@stabulaxverbatim
You are using a type of "tabular*" construct that is only allowed in AmS-LaTeX.}
\expandafter\let\csname endtabulax*\endcsname =\endtrivlist


 \def\endequation{%
     \ifmmode\ifinner 
      \iftag@
        \addtocounter{equation}{-1} 
        $\hfil
           \displaywidth\linewidth\@taggnum\egroup \endtrivlist
        \global\tag@false
        \global\@ignoretrue   
      \else
        $\hfil
           \displaywidth\linewidth\@eqnnum\egroup \endtrivlist
        \global\tag@false
        \global\@ignoretrue 
      \fi
     \else   
      \iftag@
        \addtocounter{equation}{-1} 
        \eqno \hbox{\@taggnum}
        \global\tag@false%
        $$\global\@ignoretrue
      \else
        \eqno \hbox{\@eqnnum}
        $$\global\@ignoretrue
      \fi
     \fi\fi
 } 

 \newif\iftag@ \tag@false
 
 \def\TCItag{\@ifnextchar*{\@TCItagstar}{\@TCItag}}
 \def\@TCItag#1{%
     \global\tag@true
     \global\def\@taggnum{(#1)}}
 \def\@TCItagstar*#1{%
     \global\tag@true
     \global\def\@taggnum{#1}}

  \@ifundefined{tag}{
     \def\tag{\@ifnextchar*{\@tagstar}{\@tag}}
     \def\@tag#1{%
         \global\tag@true
         \global\def\@taggnum{(#1)}}
     \def\@tagstar*#1{%
         \global\tag@true
         \global\def\@taggnum{#1}}
  }{}

\makeatother

\begin{document}
\title[Manifolds with positive Yamabe invariants and Paneitz operator]{%
Riemannian manifolds with positive Yamabe invariant and Paneitz operator}
\author{Matthew J. Gursky}
\address{Department of Mathematics, University of Notre Dame, Notre Dame, IN
46556}
\email{mgursky@nd.edu}
\author{Fengbo Hang}
\address{Courant Institute, New York University, 251 Mercer Street, New York
NY 10012}
\email{fengbo@cims.nyu.edu}
\author{Yueh-Ju Lin}
\address{Department of Mathematics, University of Michigan, Ann Arbor, MI
48109}
\email{yuehjul@umich.edu}

\begin{abstract}
For a Riemannian manifold with dimension at least six, we prove that the
existence of a conformal metric with positive scalar and Q curvature is
equivalent to the positivity of both the Yamabe invariant and the Paneitz
operator.
\end{abstract}

\maketitle

\section{Introduction\label{sec1}}

Let $\left( M,g\right) $ be a smooth compact Riemannian manifold with
dimension $n\geq 3$. Denote $\left[ g\right] $ as the conformal class of
metrics associated with $g$. The Yamabe invariant is given by (see \cite{LP})%
\begin{equation}
Y\left( M,g\right) =\inf_{\widetilde{g}\in \left[ g\right] }\frac{\int_{M}%
\widetilde{R}d\widetilde{\mu }}{\widetilde{\mu }\left( M\right) ^{\frac{n-2}{%
n}}},  \label{eq1.1}
\end{equation}%
here $\widetilde{R}$ is the scalar curvature of $\widetilde{g}$ and $%
\widetilde{\mu }$ is the measure associated with $\widetilde{g}$. In terms
of the conformal Laplacian operator%
\begin{equation}
L=-\frac{4\left( n-1\right) }{n-2}\Delta +R,  \label{eq1.2}
\end{equation}%
we have%
\begin{eqnarray}
Y\left( M,g\right) &=&\inf_{\substack{ u\in C^{\infty }\left( M\right)  \\ %
u>0}}\frac{\int_{M}Lu\cdot ud\mu }{\left\Vert u\right\Vert _{L^{\frac{2n}{n-2%
}}}^{2}}  \label{eq1.3} \\
&=&\inf_{u\in H^{1}\left( M\right) \backslash \left\{ 0\right\} }\frac{%
\int_{M}\left( \frac{4\left( n-1\right) }{n-2}\left\vert \nabla u\right\vert
^{2}+Ru^{2}\right) d\mu }{\left\Vert u\right\Vert _{L^{\frac{2n}{n-2}}}^{2}}.
\notag
\end{eqnarray}%
In particular $Y\left( M,g\right) >0$ if and only if the first eigenvalue $%
\lambda _{1}\left( L\right) >0$. Moreover based on the fact that the first
eigenfunction of $L$ must be strictly positive or negative, we know%
\begin{equation}
\lambda _{1}\left( L\right) >0\Leftrightarrow \text{there exists a }%
\widetilde{g}\in \left[ g\right] \text{ with }\widetilde{R}>0.  \label{eq1.4}
\end{equation}%
Here we are looking for similar characterization for Paneitz operator and $Q$
curvature (see \cite{B,P}). More precisely we are interested in the solution
to

\begin{problem}
\label{problem1.1}For a Riemannian manifold with dimension at least five,
can we find a conformal invariant condition which is equivalent to the
existence of a conformal metric with positive scalar and $Q$ curvature?
\end{problem}

In view of the results on positivity of Paneitz operator in \cite{GM,XY},
many people suspect the conformal invariant condition wanted in Problem \ref%
{problem1.1} should be the positivity of Yamabe invariant and Paneitz
operator. As a consequence of the main result below, this is verified for
dimension at least six (see Corollary \ref{cor1.1}). It is very likely the
statement is still true for dimension five. However due to technical
constrains in our approach, dimension five case remains an open problem.

To write down the formula of $Q$ curvature and Paneitz operator, following 
\cite{B}, let%
\begin{equation}
J=\frac{R}{2\left( n-1\right) },\quad A=\frac{1}{n-2}\left( Rc-Jg\right) ,
\label{eq1.5}
\end{equation}%
here $Rc$ is the Ricci curvature tensor. The $Q$ curvature is given by%
\begin{equation}
Q=-\Delta J-2\left\vert A\right\vert ^{2}+\frac{n}{2}J^{2}.  \label{eq1.6}
\end{equation}%
The Paneitz operator is given by%
\begin{equation}
P\varphi =\Delta ^{2}\varphi +\func{div}\left( 4A\left( \nabla \varphi
,e_{i}\right) e_{i}-\left( n-2\right) J\nabla \varphi \right) +\frac{n-4}{2}%
Q\varphi .  \label{eq1.7}
\end{equation}%
Here $e_{1},\cdots ,e_{n}$ is a local orthonormal frame with respect to $g$.
For $n\geq 5$, under a conformal change of metric, the Paneitz operator
satisfies 
\begin{equation}
P_{\rho ^{\frac{4}{n-4}}g}\varphi =\rho ^{-\frac{n+4}{n-4}}P_{g}\left( \rho
\varphi \right) ,  \label{eq1.8}
\end{equation}%
(see \cite{B}); compare to the conformal covariant property of the conformal
Laplacian (\ref{eq1.2}). In addition, the $Q$ curvature is transformed by
the formula%
\begin{equation}
Q_{\rho ^{\frac{4}{n-4}}g}=\frac{2}{n-4}\rho ^{-\frac{n+4}{n-4}}P_{g}\rho .
\label{eq1.9}
\end{equation}

We now define two conformal invariants related to the $Q$-curvature. First,
in analogy with the Yamabe invariant, we define%
\begin{equation}
Y_{4}^{+}\left( M,g\right) =\frac{n-4}{2}\inf_{\widetilde{g}\in \left[ g%
\right] }\frac{\int_{M}\widetilde{Q}d\widetilde{\mu }}{\left( \widetilde{\mu 
}\left( M\right) \right) ^{\frac{n-4}{n}}}=\inf_{\substack{ u\in C^{\infty
}\left( M\right)  \\ u>0}}\frac{\int_{M}Pu\cdot ud\mu }{\left\Vert
u\right\Vert _{L^{\frac{2n}{n-4}}}^{2}}.  \label{eq1.10}
\end{equation}%
We use $Y_{4}^{+}\left( M,g\right) $ to emphasize the infimum is taken over
positive functions (i.e. conformal factors). To define the second invariant,
denote

\begin{eqnarray}
E\left( \varphi \right) &=&\int_{M}P\varphi \cdot \varphi d\mu
\label{eq1.11} \\
&=&\int_{M}\left( \left( \Delta \varphi \right) ^{2}-4A\left( \nabla \varphi
,\nabla \varphi \right) +\left( n-2\right) J\left\vert \nabla \varphi
\right\vert ^{2}+\frac{n-4}{2}Q\varphi ^{2}\right) d\mu .  \notag
\end{eqnarray}%
It is clear that $E\left( \varphi \right) $ is well defined for $\varphi \in
H^{2}\left( M\right) $. Define%
\begin{equation}
Y_{4}\left( M,g\right) =\inf_{u\in H^{2}\left( M\right) \backslash \left\{
0\right\} }\frac{E\left( u\right) }{\left\Vert u\right\Vert _{L^{\frac{2n}{%
n-4}}}^{2}}.  \label{eq1.12}
\end{equation}%
Clearly%
\begin{equation*}
Y_{4}\left( M,g\right) \leq Y_{4}^{+}\left( M,g\right) ,
\end{equation*}
but in general (and in contrast to the usual Yamabe invariant) we have an
inequality instead of equality, due to the fact Paneitz operator is fourth
order. Note that from standard elliptic theory $Y_{4}\left( M,g\right) >0$
if and only if the first eigenvalue $\lambda _{1}\left( P\right) >0$ i.e. $P$
is positive definite.

When the Yamabe invariant $Y\left( M,g\right) >0$, there is another closely
related quantity $Y_{4}^{\ast }\left( M,g\right) $ defined by%
\begin{equation}
Y_{4}^{\ast }\left( M,g\right) =\frac{n-4}{2}\inf_{\substack{ \widetilde{g}%
\in \left[ g\right]  \\ \widetilde{R}>0}}\frac{\int_{M}\widetilde{Q}d%
\widetilde{\mu }}{\left( \widetilde{\mu }\left( M\right) \right) ^{\frac{n-4%
}{n}}}.  \label{eq1.13}
\end{equation}%
Obviously%
\begin{equation}
Y_{4}\left( M,g\right) \leq Y_{4}^{+}\left( M,g\right) \leq Y_{4}^{\ast
}\left( M,g\right) .  \label{eq1.14}
\end{equation}
One of the main goals of this paper is to understand the relationship
between these quantities, and their connection to the existence of a metric
with positive scalar and $Q$-curvature.

Our motivation for studying these problems comes from recent progress in the
understanding of $Q$ curvature equations for dimension at least five in \cite%
{GM,HY1,HY2}. Paneitz operator and $Q$ curvature were brought to attention
in \cite{CGY}. For dimension at least five, in various work \cite{HeR,HuR,R}%
, people had realized the important role of positivity of Green's function
of Paneitz operator in understanding the $Q$ curvature equations. Such kind
of positivity is hard to get due to the lack of maximum principle for fourth
order equations. A breakthrough was achieved in \cite{GM}, namely for $n\geq
5$, if $R>0$ and $Q>0$, then $P>0$ and the Green's function of Paneitz
operator $G_{P}$ is strictly positive. Subsequently in \cite{HY1}, for
positive Yamabe invariant case it was found the positivity of Green's
function of Paneitz operator is equivalent to the existence of a conformal
metric with positive $Q$ curvature. Indeed it was shown that if $n\geq 5$, $%
Y\left( M,g\right) >0$, then there exists a conformal metric with positive $%
Q $ curvature if and only if $\ker P=0$ and $G_{P}>0$, it is also equivalent
to $\ker P=0$ and $G_{P,p}>0$ for a fixed $p$. Note the positivity of
Green's function is a conformal invariant condition. In \cite{GM}, it was
shown that $R>0$ and $Q>0$ implies $Y_{4}\left( M,g\right) $ is achieved at
a positive smooth function. The assumption was relaxed to $Y\left(
M,g\right) >0,Q>0$ and $P>0$ in \cite{HY2}. Trying to understand relations
between various assumptions motivates problems considered here.

\begin{theorem}
\label{thm1.1}Let $\left( M,g\right) $ be a smooth compact Riemannian
manifold with dimension $n\geq 6$. If $Y\left( M,g\right) >0$ and $%
Y_{4}^{\ast }\left( M,g\right) >0$, then there exists a metric $\widetilde{g}%
\in \left[ g\right] $ satisfying $\widetilde{R}>0$ and $\widetilde{Q}>0$.
\end{theorem}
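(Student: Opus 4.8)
The plan is to realise $Y_{4}^{\ast }\left( M,g\right) $ as the value of a constrained minimization over conformal factors with positive scalar curvature, to show that the infimum is attained by a smooth positive function, and then, if needed, to perturb the extremal metric so that both curvatures become strictly positive. Since $Y\left( M,g\right) >0$, by (\ref{eq1.4}) I may assume throughout that $R_{g}>0$.

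\emph{Reduction.} Writing a conformal metric as $\widetilde{g}=u^{4/\left( n-4\right) }g$ with $u>0$, formula (\ref{eq1.9}) gives $\widetilde{Q}=\frac{2}{n-4}u^{-\left( n+4\right) /\left( n-4\right) }P_{g}u$, so $\widetilde{Q}>0\Leftrightarrow P_{g}u>0$; setting $v=u^{\left( n-2\right) /\left( n-4\right) }$ one has $\widetilde{g}=v^{4/\left( n-2\right) }g$, and the conformal covariance of $L$ gives $\widetilde{R}>0\Leftrightarrow L_{g}v>0$. Thus the theorem reduces to finding $u\in C^{\infty }\left( M\right) $, $u>0$, with $P_{g}u>0$ and $L_{g}\bigl(u^{\left( n-2\right) /\left( n-4\right) }\bigr)>0$. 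The hypothesis $n\geq 6$ is convenient here, since then $\left( n-2\right) /\left( n-4\right) \leq 2$ and $u\mapsto u^{\left( n-2\right) /\left( n-4\right) }$ carries the critical Sobolev space $H^{2}\hookrightarrow L^{2n/\left( n-4\right) }$ into the critical space $H^{1}\hookrightarrow L^{2n/\left( n-2\right) }$ for the Yamabe functional, keeping every constraint in natural function spaces.

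\emph{Minimization and compactness.} Let $\mathcal{A}=\bigl\{u\in H^{2}\left( M\right):u\geq 0,\ \left\Vert u\right\Vert _{L^{2n/\left( n-4\right) }}=1,\ L_{g}\bigl(u^{\left( n-2\right) /\left( n-4\right) }\bigr)\geq 0\bigr\}$; by a density argument $Y_{4}^{\ast }\left( M,g\right) =\inf_{\mathcal{A}}E$, which is $>0$ by hypothesis. Any minimizing sequence $u_{k}$ is bounded in $H^{2}$, by G\aa rding's inequality for the elliptic operator $P$ together with $\left\Vert u_{k}\right\Vert _{L^{2}}\leq C\left\Vert u_{k}\right\Vert _{L^{2n/\left( n-4\right) }}$; so along a subsequence $u_{k}\rightharpoonup u_{\infty }$ in $H^{2}$, with $\nabla u_{k}\to \nabla u_{\infty }$ and $u_{k}\to u_{\infty }$ in $L^{2}$, hence the lower-order terms of $E$ converge while $\int \left( \Delta u_{k}\right) ^{2}$ is weakly lower semicontinuous. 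The remaining issue is concentration of $L^{2n/\left( n-4\right) }$-mass. Highly concentrated bumps at a point are admissible for $\mathcal{A}$ exactly because $R_{g}>0$ (so $\widetilde{R}>0$ both near and far from the bump), and they give $Y_{4}^{\ast }\left( M,g\right) \leq S_{n}$, the Sobolev constant of $\Delta ^{2}$ on $\mathbb{R}^{n}$; conversely a Brezis--Lieb / concentration-compactness decomposition shows that any loss of mass in $u_{k}$ would force $Y_{4}^{\ast }\left( M,g\right) \geq S_{n}$. Hence if $Y_{4}^{\ast }\left( M,g\right) <S_{n}$ then $u_{\infty }\not\equiv 0$, $\left\Vert u_{\infty }\right\Vert _{L^{2n/\left( n-4\right) }}=1$, and $u_{\infty }$ attains the infimum. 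As in Aubin's treatment of the Yamabe problem, a refined expansion of $E$ on concentrated bumps should produce the strict inequality $Y_{4}^{\ast }\left( M,g\right) <S_{n}$ unless $\left( M,g\right) $ is conformal to the round sphere (where the theorem is trivial); carrying out that expansion is precisely where $n\geq 6$ is needed, and it is the step I expect to be the main obstacle.

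\emph{Euler--Lagrange equation and conclusion.} Elliptic theory and the strong maximum principle (via $Y>0$) give regularity and positivity of $u_{\infty }$, and a Lagrange multiplier analysis of the constrained minimum yields
\[
P_{g}u_{\infty }=Y_{4}^{\ast }\left( M,g\right)\, u_{\infty }^{\left( n+4\right) /\left( n-4\right) }+c_{n}\,u_{\infty }^{2/\left( n-4\right) }L_{g}\nu ,\qquad \nu \geq 0,\quad \mathrm{supp}\,\nu \subseteq Z:=\bigl\{L_{g}\bigl(u_{\infty }^{\left( n-2\right) /\left( n-4\right) }\bigr)=0\bigr\},
\]
the first multiplier being identified with $Y_{4}^{\ast }\left( M,g\right) $ on pairing with $u_{\infty }$ and using $L_{g}\bigl(u_{\infty }^{\left( n-2\right) /\left( n-4\right) }\bigr)=0$ on $Z$. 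If $Z=\emptyset $ the scalar-curvature constraint is inactive, so $\widetilde{g}_{\infty }=u_{\infty }^{4/\left( n-4\right) }g$ already has $\widetilde{R}_{\infty }>0$ and $\widetilde{Q}_{\infty }=\frac{2}{n-4}Y_{4}^{\ast }\left( M,g\right) >0$, and we are done. If $Z\neq \emptyset $ one must still show $\widetilde{Q}_{\infty }>0$ on $Z$ — when $Z$ is Lebesgue-negligible $\nu $ vanishes, while the positive-measure case requires controlling the sign of $L_{g}\nu $ there (the second delicate point) — after which $\widetilde{g}_{\infty }$ has $\widetilde{Q}_{\infty }>0$ and $\widetilde{R}_{\infty }\geq 0$, and we finish by a perturbation: with $\phi _{1}>0$ the first eigenfunction of $L_{\widetilde{g}_{\infty }}$ and $\lambda _{1}>0$ its eigenvalue (positive since $Y>0$), the metric $\widehat{g}_{t}=\left( 1+t\phi _{1}\right) ^{4/\left( n-2\right) }\widetilde{g}_{\infty }$ satisfies
\[
\widehat{R}_{t}=\left( 1+t\phi _{1}\right) ^{-\left( n+2\right) /\left( n-2\right) }\bigl(\widetilde{R}_{\infty }+t\lambda _{1}\phi _{1}\bigr)>0\qquad \text{for every }t>0,
\]
while $\widehat{Q}_{t}\to \widetilde{Q}_{\infty }>0$ as $t\to 0^{+}$; so for $t>0$ small the metric $\widehat{g}_{t}\in \left[ g\right] $ has $\widehat{R}_{t}>0$ and $\widehat{Q}_{t}>0$, as required (and then $P>0$ follows automatically from \cite{GM}).
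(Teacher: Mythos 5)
Your proposal takes a genuinely different route from the paper (direct constrained minimization of the Paneitz energy versus the paper's continuity method along the family $\widetilde{Q}-\lambda \sigma _{2}(\widetilde{A})=\chi u^{-\frac{n+4}{n-4}}$), but as written it has gaps that are not technical footnotes — they are the central difficulties of the subject. First, the attainment of the infimum hinges on the strict Aubin-type inequality $Y_{4}^{\ast }<S_{n}$, which you explicitly leave as "the main obstacle." For the fourth-order functional this is not a routine adaptation of the Yamabe argument: in high dimensions the test-function expansion involves the Weyl tensor, and in the low-dimensional and locally conformally flat cases the known mechanism is a positive mass theorem for the Paneitz operator (cf.\ \cite{HuR}), which itself presupposes positivity of the Green's function $G_{P}$ — but $G_{P}>0$ is one of the \emph{conclusions} drawn downstream of Theorem \ref{thm1.1} (Corollary \ref{cor1.2}), so this step risks circularity, and nothing in the hypotheses rules out $Y_{4}^{\ast }=S_{n}$ a priori. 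Second, the sentence "elliptic theory and the strong maximum principle give regularity and positivity of $u_{\infty }$" is precisely what fails for fourth-order equations: there is no maximum principle for $P$, and positivity of extremals of $E$ is exactly the issue that forces the machinery of \cite{GM,HY1,HY2}; minimizing over $u\geq 0$ does not rescue this, since $|u|$ is not admissible for the $H^{2}$ energy and the sign constraint only adds another multiplier. Third, the constraint $L_{g}\bigl(u^{(n-2)/(n-4)}\bigr)\geq 0$ is an obstacle-type condition on a second-order quantity, so the Euler--Lagrange relation you write is heuristic (the multiplier $\nu $ is only a measure), and the contact-set case $Z\neq \emptyset $ — where you must still produce $\widetilde{Q}_{\infty }>0$ — is left open by your own admission. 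Also note that replacing the strict constraint $\widetilde{R}>0$ in (\ref{eq1.13}) by $\widetilde{R}\geq 0$ and claiming equality of infima "by density" needs an argument.

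By contrast, the paper avoids every one of these issues: it deforms a solvable equation (obtained from the elementary Proposition \ref{prop2.1}) through the path (\ref{eq2.4}) with the scalar-curvature condition $\widetilde{J}>0$ carried along the path; openness comes from the pointwise positivity of the linearized operator (Lemma \ref{lem2.1}, where $n\geq 6$ enters algebraically), and closedness from integral a priori estimates (Section \ref{sec3}) in which $Y_{4}^{\ast }>0$ is used only to get the $L^{\frac{2n}{n-4}}$ bound on $u$ — no sharp Sobolev constants, no blow-up analysis, and no prior positivity of $G_{P}$ are needed. Your final eigenfunction perturbation (upgrading $\widetilde{R}_{\infty }\geq 0$ to $>0$ while keeping $\widetilde{Q}>0$) is fine, but it only helps once the earlier steps are actually established; as it stands the proposal does not constitute a proof.
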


Combine Theorem \ref{thm1.1} with existence and positivity results in \cite%
{GM,HY2,XY} we have the following corollaries.

\begin{corollary}
\label{cor1.1}Let $\left( M,g\right) $ be a smooth compact Riemannian
manifold with dimension $n\geq 6$. Then the following statements are
equivalent

\begin{enumerate}
\item $Y\left( M,g\right) >0,P>0$.

\item $Y\left( M,g\right) >0,Y_{4}^{\ast }\left( M,g\right) >0$.

\item there exists a metric $\widetilde{g}\in \left[ g\right] $ satisfying $%
\widetilde{R}>0$ and $\widetilde{Q}>0$.
\end{enumerate}
\end{corollary}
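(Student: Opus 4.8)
The statement is a three-way equivalence, so the plan is to prove the cyclic chain $(1)\Rightarrow(2)\Rightarrow(3)\Rightarrow(1)$. The only substantive ingredient is Theorem~\ref{thm1.1}, which supplies $(2)\Rightarrow(3)$; the other two implications follow formally from facts already recorded in the introduction, in particular (\ref{eq1.4}), (\ref{eq1.8}), (\ref{eq1.14}), and the positivity result of Gursky--Malchiodi \cite{GM}.

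For $(1)\Rightarrow(2)$: by standard elliptic theory $P>0$ means $\lambda_1\left(P\right)>0$, which is equivalent to $Y_4\left(M,g\right)>0$. Since $Y\left(M,g\right)>0$ is assumed, $Y_4^{\ast}\left(M,g\right)$ is well defined, and (\ref{eq1.14}) gives $Y_4^{\ast}\left(M,g\right)\geq Y_4\left(M,g\right)>0$; together with $Y\left(M,g\right)>0$ this is $(2)$. (This step uses neither Theorem~\ref{thm1.1} nor $n\geq6$.) The implication $(2)\Rightarrow(3)$ is exactly Theorem~\ref{thm1.1}.

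For $(3)\Rightarrow(1)$: suppose $\widetilde{g}\in\left[g\right]$ satisfies $\widetilde{R}>0$ and $\widetilde{Q}>0$. Since $\widetilde{g}$ is a metric in $\left[g\right]$ with positive scalar curvature, (\ref{eq1.4}) gives $\lambda_1\left(L\right)>0$ and hence $Y\left(M,g\right)>0$. For the Paneitz operator, the theorem of \cite{GM} (valid for all $n\geq5$) asserts precisely that $\widetilde{R}>0$ and $\widetilde{Q}>0$ force $P_{\widetilde{g}}>0$, i.e. $Y_4\left(M,\widetilde{g}\right)>0$. It remains to note that $Y_4$ is a conformal invariant: writing $\widetilde{g}=\rho^{\frac{4}{n-4}}g$ with $\rho>0$, the conformal covariance (\ref{eq1.8}) gives $E_{\widetilde{g}}\left(\varphi\right)=E_{g}\left(\rho\varphi\right)$, while $d\widetilde{\mu}=\rho^{\frac{2n}{n-4}}d\mu$ gives $\|\varphi\|_{L^{\frac{2n}{n-4}}(\widetilde{\mu})}=\|\rho\varphi\|_{L^{\frac{2n}{n-4}}(\mu)}$, so the Rayleigh quotient in (\ref{eq1.12}) is unchanged under $\varphi\mapsto\rho\varphi$; therefore $Y_4\left(M,g\right)=Y_4\left(M,\widetilde{g}\right)>0$, which is to say $P>0$. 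This closes the cycle.

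Granting Theorem~\ref{thm1.1}, there is no genuine obstacle in this argument; the single point needing brief care is the conformal invariance of the positivity of $P$, which is just the scaling identity (\ref{eq1.8}) together with the matching weight in the volume form, everything else being a direct appeal to (\ref{eq1.4}), (\ref{eq1.14}), \cite{GM}, and Theorem~\ref{thm1.1}. If desired, the positivity $P_{\widetilde{g}}>0$ in the last step can instead be deduced from \cite{XY} or \cite{HY2}, but \cite{GM} already suffices.
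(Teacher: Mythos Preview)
Your argument is correct and matches the paper's own treatment: the paper does not spell out a proof but simply states that Corollary~\ref{cor1.1} follows by combining Theorem~\ref{thm1.1} with the positivity results of \cite{GM,HY2,XY}, which is exactly the cycle $(1)\Rightarrow(2)\Rightarrow(3)\Rightarrow(1)$ you wrote out. The only point worth noting is that the paper lists \cite{XY} alongside \cite{GM} and \cite{HY2} as sources for the implication $(3)\Rightarrow(1)$, whereas you (rightly) observe that \cite{GM} alone already suffices.
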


Corollary \ref{cor1.1} answers Problem \ref{problem1.1} for dimension at
least six.

\begin{corollary}
\label{cor1.2}Let $\left( M,g\right) $ be a smooth compact Riemannian
manifold with dimension $n\geq 6$. If $Y\left( M,g\right) >0$ and $%
Y_{4}^{\ast }\left( M,g\right) >0$, then $P>0$, the Green's function $%
G_{P}>0 $, and $Y_{4}\left( M,g\right) $ is achieved at a positive smooth
function $u $ with $R_{u^{\frac{4}{n-4}}g}>0$ and $Q_{u^{\frac{4}{n-4}%
}g}=const$. In particular,%
\begin{equation*}
Y_{4}\left( M,g\right) =Y_{4}^{+}\left( M,g\right) =Y_{4}^{\ast }\left(
M,g\right) .
\end{equation*}
\end{corollary}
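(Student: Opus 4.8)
The plan is to deduce the corollary from Theorem \ref{thm1.1} together with the existence and positivity results of \cite{GM} (and, under slightly weaker hypotheses, \cite{HY2}; \cite{XY} enters the companion Corollary \ref{cor1.1}), using throughout that $Y_{4}$, $Y_{4}^{+}$, $Y_{4}^{\ast }$ and the positivity of the Green's function of the Paneitz operator are all conformally invariant. The substantive analytic work has already been carried out in Theorem \ref{thm1.1}, which I take as given.

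First I would apply Theorem \ref{thm1.1} to fix a conformal representative $\widetilde{g}=\rho _{0}^{4/(n-4)}g\in [g]$, with $\rho _{0}\in C^{\infty }(M)$, $\rho _{0}>0$, having $\widetilde{R}>0$ and $\widetilde{Q}>0$. Since $n\geq 6\geq 5$, I would then apply \cite{GM} to $(M,\widetilde{g})$ (alternatively \cite{HY2}, whose hypotheses $Y(M,\widetilde{g})>0$, $\widetilde{Q}>0$, $P_{\widetilde{g}}>0$ are all available once $P_{\widetilde{g}}>0$ is known): this produces $P_{\widetilde{g}}>0$, $G_{P_{\widetilde{g}}}>0$, and a positive smooth minimizer $\widehat{u}$ of $Y_{4}(M,\widetilde{g})$ whose metric $g_{1}:=\widehat{u}^{4/(n-4)}\widetilde{g}$ satisfies $R_{g_{1}}>0$ and $Q_{g_{1}}\equiv \mathrm{const}$; up to the normalization of $\widehat{u}$ that constant is $\tfrac{2}{n-4}Y_{4}(M,\widetilde{g})$, which is positive since $P_{\widetilde{g}}>0$ forces $Y_{4}(M,\widetilde{g})>0$.

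Next I would transport everything to $(M,g)$. The conformal covariance (\ref{eq1.8})--(\ref{eq1.9}) gives, for every $\varphi$, $E_{\widetilde{g}}(\varphi )=E_{g}(\rho _{0}\varphi )$ and $\Vert \varphi \Vert _{L^{\frac{2n}{n-4}}(\widetilde{g})}=\Vert \rho _{0}\varphi \Vert _{L^{\frac{2n}{n-4}}(g)}$, while $\varphi ^{4/(n-4)}\widetilde{g}=(\rho _{0}\varphi )^{4/(n-4)}g$, so the positive-scalar-curvature side condition is preserved as well. Hence $\varphi \mapsto \rho _{0}\varphi$ is a bijection of $H^{2}(M)$ onto itself carrying the positive smooth functions, and the class admissible for $Y_{4}^{\ast }$, to themselves, and identifying the Rayleigh quotient $E(\cdot )/\Vert \cdot \Vert _{L^{\frac{2n}{n-4}}}^{2}$ of $\widetilde{g}$ with that of $g$; this yields $Y_{4}(M,g)=Y_{4}(M,\widetilde{g})$, $Y_{4}^{+}(M,g)=Y_{4}^{+}(M,\widetilde{g})$, $Y_{4}^{\ast }(M,g)=Y_{4}^{\ast }(M,\widetilde{g})$. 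In particular $Y_{4}(M,g)>0$, i.e.\ $P>0$; $G_{P}>0$ by conformal invariance of the sign of the Green's function; and $u:=\rho _{0}\widehat{u}>0$ is a smooth minimizer of $Y_{4}(M,g)$ with $u^{4/(n-4)}g=g_{1}$, so this metric has $R>0$ and $Q\equiv \mathrm{const}>0$: this $u$ is the extremal function asserted in the statement.

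It remains to note $Y_{4}=Y_{4}^{+}=Y_{4}^{\ast }$. Since $R_{g_{1}}>0$, the function $u$ is admissible for $Y_{4}^{\ast }(M,g)$, so $Y_{4}^{\ast }(M,g)\leq E_{g}(u)/\Vert u\Vert _{L^{\frac{2n}{n-4}}}^{2}=Y_{4}(M,g)$, which together with the elementary chain $Y_{4}\leq Y_{4}^{+}\leq Y_{4}^{\ast }$ forces equality throughout. The only step deserving a word of care here is the \emph{strict} inequality $R_{g_{1}}>0$ (rather than merely $R_{g_{1}}\geq 0$), since it is precisely what makes $u$ admissible above; it is part of \cite{GM}'s conclusion, but even with only $R_{g_{1}}\geq 0$ it follows from $Q_{g_{1}}$ being a positive constant, by evaluating (\ref{eq1.6}) at a minimum point of $J=R_{g_{1}}/(2(n-1))$ — there $\Delta J\geq 0$, so $\tfrac{n}{2}J^{2}\geq Q_{g_{1}}>0$, hence $\min J>0$. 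Beyond this I expect no genuine obstacle: the corollary is bookkeeping with conformal changes plus one maximum-principle observation, the real difficulty of the paper being concentrated in Theorem \ref{thm1.1}, whose proof rests on a delicate constrained variational argument in which the blow-up analysis is what confines the result to $n\geq 6$.
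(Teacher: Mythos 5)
Your proposal is correct and follows essentially the same route as the paper, which gives no explicit proof of this corollary but simply directs the reader to combine Theorem \ref{thm1.1} with the existence and positivity results of \cite{GM,HY2}: apply Theorem \ref{thm1.1} to obtain a conformal representative with positive scalar and $Q$ curvature, invoke \cite{GM} (or \cite{HY2}) for $P>0$, $G_{P}>0$ and the smooth positive minimizer with $R>0$ and constant $Q$, transport back by conformal covariance, and close the chain $Y_{4}\leq Y_{4}^{+}\leq Y_{4}^{\ast}\leq Y_{4}$ using the admissibility of the minimizer for $Y_{4}^{\ast}$. Your added maximum-principle remark upgrading $R\geq 0$ to $R>0$ at the minimizing metric is a harmless (and correct) extra safeguard not spelled out in the paper.
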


The dimensional restriction $n\geq 6$ is an unfortunate by-product of our
technique and it is very likely the result holds in dimension five as well.
To explain our approach, we first point out that the $Q$ curvature equation
is variational: a metric has constant $Q$ curvature if and only if it is a
critical point of the total $Q$ curvature functional $\int_{M}Q_{g}d\mu _{g}$
with $g$ running through the set of conformal metrics with unit volume. A
closely related quantity is%
\begin{equation}
\sigma _{2}\left( A\right) =\frac{1}{2}\left( J^{2}-\left\vert A\right\vert
^{2}\right) .  \label{eq1.15}
\end{equation}%
Indeed $\sigma _{2}\left( A\right) $ equation is also variational in similar
sense. Since%
\begin{equation}
Q=-\Delta J+\frac{n-4}{2}J^{2}+4\sigma _{2}\left( A\right) ,  \label{eq1.16}
\end{equation}%
we have%
\begin{equation}
\int_{M}Qd\mu =\frac{n-4}{2}\int_{M}J^{2}d\mu +4\int_{M}\sigma _{2}\left(
A\right) d\mu .  \label{eq1.17}
\end{equation}%
Obviously $\int_{M}J^{2}d\mu $ is always nonnegative. For $t\geq 1$ consider
the functional%
\begin{equation}
\frac{n-4}{2}t\int_{M}J_{g}^{2}d\mu _{g}+4\int_{M}\sigma _{2}\left(
A_{g}\right) d\mu _{g}.  \label{eq1.18}
\end{equation}%
A critical metric of this functional restricted to the space of conformal
metrics of unit volume satisfies%
\begin{equation}
t\left( -\Delta J+\frac{n-4}{2}J^{2}\right) +4\sigma _{2}\left( A\right)
=const.  \label{eq1.19}
\end{equation}

In the appendix, we will use elementary methods to show that if $Y\left(
M,g\right) >0$, then there exists $g_{0}\in \left[ g\right] $, $g_{0}=u_{0}^{%
\frac{4}{n-4}}g$ and $t_{0}\gg 1$ such that%
\begin{equation}
t_{0}\left( -\Delta _{0}J_{0}+\frac{n-4}{2}J_{0}^{2}\right) +4\sigma
_{2}\left( A_{0}\right) >0  \label{eq1.20}
\end{equation}%
and $J_{0}>0$. Define $f$ as%
\begin{equation}
t_{0}\left( -\Delta _{0}J_{0}+\frac{n-4}{2}J_{0}^{2}\right) +4\sigma
_{2}\left( A_{0}\right) =fu_{0}^{-\frac{n+4}{n-4}}.  \label{eq1.21}
\end{equation}%
Then for $1\leq t\leq t_{0}$ we consider the following 1-parameter family of
equations:%
\begin{equation}
t\left( -\widetilde{\Delta }J+\frac{n-4}{2}\widetilde{J}^{2}\right) +4\sigma
_{2}\left( \widetilde{A}\right) =fu^{-\frac{n+4}{n-4}},\quad \widetilde{g}%
=u^{\frac{4}{n-4}}g.  \label{eq1.22}
\end{equation}%
Let%
\begin{eqnarray}
&&S  \label{eq1.23} \\
&=&\left\{ t\in \left[ 1,t_{0}\right] :\text{there exists positive smooth }u%
\text{ solving (\ref{eq1.22}), with }\widetilde{R}>0\right\} .  \notag
\end{eqnarray}%
Then $t_{0}\in S$. We will show $S$ is both open and closed by the implicit
function theorem and apriori estimates. Hence $S=\left[ 1,t_{0}\right] $. It
follows that there exists a $\widetilde{g}\in \left[ g\right] $ with $%
\widetilde{R}>0$ and $\widetilde{Q}>0$.

We conclude the introduction with some remarks. The dimensional restriction $%
n\geq 6$ appears in both the open and closed part of the argument. The power
of the conformal factor $u$ on the right hand side of (\ref{eq1.22}) is
chosen to be negative to give better estimate of solutions. Moreover this
choice also leads to a good sign of the zeroth order term in the linearized
operator. We also observe that our path of equations is variational, it has
a divergence structure which we will exploit in apriori estimates. At last
we note that in dimension four a path of equations which is analogous to (%
\ref{eq1.22}) is considered in \cite{CGY} to produce a conformal metric with
positive scalar curvature and $\sigma _{2}\left( A\right) $ curvature
assuming the positivity of Yamabe invariant and conformal invariant $%
\int_{M}\sigma _{2}\left( A\right) d\mu $.

\textbf{Acknowledgements. }The first author is supported in part by the NSF
grant DMS--1206661. We would like to thank Professor Alice Chang and Paul
Yang for valuable discussions.

\section{The method of continuity and openness\label{sec2}}

In this section we set up the continuity method. It will be more convenient
if we first rewrite equation (\ref{eq1.22}) in terms of $Q$ and $\sigma
_{2}\left( A\right) $. Using (\ref{eq1.16}), equation (\ref{eq1.22}) can be
expressed as%
\begin{equation}
t\left( \widetilde{Q}-4\sigma _{2}\left( \widetilde{A}\right) \right)
+4\sigma _{2}\left( \widetilde{A}\right) =fu^{-\frac{n+4}{n-4}},
\label{eq2.1}
\end{equation}%
hence%
\begin{equation}
t\widetilde{Q}+4\left( 1-t\right) \sigma _{2}\left( \widetilde{A}\right)
=fu^{-\frac{n+4}{n-4}}.  \label{eq2.2}
\end{equation}%
Dividing by $t$ (recall $t\geq 1$) and denoting%
\begin{equation}
\lambda =\frac{4\left( t-1\right) }{t},\quad \chi =\frac{f}{t},
\label{eq2.3}
\end{equation}%
equation (\ref{eq1.22}) is equivalent to%
\begin{equation}
\widetilde{Q}-\lambda \sigma _{2}\left( \widetilde{A}\right) =\chi u^{-\frac{%
n+4}{n-4}},  \label{eq2.4}
\end{equation}%
here $\widetilde{g}=u^{\frac{4}{n-4}}g$ and $0\leq \lambda <4$.

To write this equation in terms of the conformal factor $u$, observe that
the Schouten tensor of the conformal metric $\widetilde{g}$ is given by%
\begin{equation}
\widetilde{A}_{ij}=A_{ij}-\frac{2}{n-4}u^{-1}u_{ij}-\frac{2}{\left(
n-4\right) ^{2}}u^{-2}\left\vert \nabla u\right\vert ^{2}g_{ij}+\frac{%
2\left( n-2\right) }{\left( n-4\right) ^{2}}u^{-2}u_{i}u_{j},  \label{eq2.5}
\end{equation}%
hence%
\begin{eqnarray}
&&\sigma _{2}\left( \widetilde{A}\right)  \label{eq2.6} \\
&=&u^{-\frac{8}{n-4}}\left[ \sigma _{2}\left( A\right) +\frac{2}{\left(
n-4\right) ^{2}}u^{-2}\left( \Delta u\right) ^{2}-\frac{2}{\left( n-4\right)
^{2}}u^{-2}\left\vert D^{2}u\right\vert ^{2}+\frac{4}{\left( n-4\right) ^{3}}%
u^{-3}\left\vert \nabla u\right\vert ^{2}\Delta u\right.  \notag \\
&&+\frac{4\left( n-2\right) }{\left( n-4\right) ^{3}}u^{-3}u_{ij}u_{i}u_{j}-%
\frac{2}{n-4}Ju^{-1}\Delta u+\frac{2}{n-4}u^{-1}A_{ij}u_{ij}-\frac{2\left(
n-1\right) }{\left( n-4\right) ^{3}}u^{-4}\left\vert \nabla u\right\vert ^{4}
\notag \\
&&\left. -\frac{2}{\left( n-4\right) ^{2}}Ju^{-2}\left\vert \nabla
u\right\vert ^{2}-\frac{2\left( n-2\right) }{\left( n-4\right) ^{2}}%
u^{-2}A_{ij}u_{i}u_{j}\right] .  \notag
\end{eqnarray}%
Using the formula (\ref{eq1.9}) we also have%
\begin{equation}
\widetilde{Q}=\frac{2}{n-4}u^{-\frac{n+4}{n-4}}Pu  \label{eq2.7}
\end{equation}%
Substituting (\ref{eq2.6}) and (\ref{eq2.7}) into (\ref{eq2.4}), then
multiplying through by $\frac{n-4}{2}u^{\frac{n+4}{n-4}}$ we have

\begin{eqnarray}
&&Pu  \label{eq2.8} \\
&=&\lambda u\left[ \frac{n-4}{2}\sigma _{2}\left( A\right) +\frac{1}{n-4}%
u^{-2}\left( \Delta u\right) ^{2}-\frac{1}{n-4}u^{-2}\left\vert
D^{2}u\right\vert ^{2}+\frac{2}{\left( n-4\right) ^{2}}u^{-3}\left\vert
\nabla u\right\vert ^{2}\Delta u\right.  \notag \\
&&+\frac{2\left( n-2\right) }{\left( n-4\right) ^{2}}%
u^{-3}u_{ij}u_{i}u_{j}-Ju^{-1}\Delta u+u^{-1}A_{ij}u_{ij}-\frac{n-1}{\left(
n-4\right) ^{2}}u^{-4}\left\vert \nabla u\right\vert ^{4}  \notag \\
&&\left. -\frac{1}{n-4}Ju^{-2}\left\vert \nabla u\right\vert ^{2}-\frac{n-2}{%
n-4}u^{-2}A_{ij}u_{i}u_{j}\right] +\frac{n-4}{2}\chi .  \notag
\end{eqnarray}%
Also, the condition that $\widetilde{J}>0$ is equivalent to the inequality%
\begin{equation}
\Delta u<-\frac{2}{n-4}u^{-1}\left\vert \nabla u\right\vert ^{2}+\frac{n-4}{2%
}Ju.  \label{eq2.9}
\end{equation}

We begin with a fact which permits us to start the continuity process.

\begin{proposition}
\label{prop2.1}Let $\left( M,g\right) $ be a smooth compact Riemannian
manifold with dimension $n\geq 5$. If $Y\left( M,g\right) >0$, then there
exists a metric $\widetilde{g}\in \left[ g\right] $ with%
\begin{equation}
-\widetilde{\Delta }\widetilde{J}+\frac{n-4}{2}\widetilde{J}^{2}>0,\quad 
\widetilde{J}>0.  \label{eq2.10}
\end{equation}
\end{proposition}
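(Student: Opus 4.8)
The plan is to take for $\widetilde{g}$ a metric of constant scalar curvature in the conformal class $[g]$. Since $Y(M,g)>0$, the resolution of the Yamabe problem produces a metric $\widetilde{g}\in[g]$ whose scalar curvature $\widetilde{R}$ is a constant, and in the positive Yamabe class this constant is positive; equivalently, one may combine the sign characterization $(\ref{eq1.4})$ with the fact that the Yamabe minimizer has constant scalar curvature. Because $J=R/\bigl(2(n-1)\bigr)$, it follows that $\widetilde{J}$ is a positive constant, so the second inequality $\widetilde{J}>0$ in $(\ref{eq2.10})$ holds automatically.

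For the first inequality, constancy of $\widetilde{J}$ gives $\widetilde{\Delta}\widetilde{J}=0$, hence $-\widetilde{\Delta}\widetilde{J}+\tfrac{n-4}{2}\widetilde{J}^{2}=\tfrac{n-4}{2}\widetilde{J}^{2}$. Since $n\geq5$ the coefficient $\tfrac{n-4}{2}$ is strictly positive, and $\widetilde{J}>0$, so this quantity is strictly positive. This proves $(\ref{eq2.10})$. Note that the hypothesis $n\geq5$ enters precisely here, to guarantee $\tfrac{n-4}{2}>0$; in dimension four a constant-scalar-curvature metric would yield only equality, which is one reason the present approach is restricted to higher dimensions.

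There is essentially no obstacle in this argument: the sole nontrivial ingredient is the existence of a Yamabe metric, which is classical, and the remaining verification is immediate algebra. If one wishes to keep the construction elementary and self-contained, one can instead begin from $g_{1}=\varphi_{1}^{4/(n-2)}g$, where $\varphi_{1}>0$ is the first eigenfunction of $L$; this already gives $R_{1}>0$, hence $\widetilde{J}>0$, by an elementary eigenvalue argument, but since $R_{1}$ need not be constant one still needs a further conformal adjustment to secure the first inequality, so the constant-scalar-curvature metric remains the most economical choice and is the natural starting point of the continuity method developed in the rest of this section.
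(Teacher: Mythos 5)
Your argument is correct: a Yamabe minimizer $\widetilde{g}\in[g]$ has constant scalar curvature, which is positive since $Y(M,g)>0$, so $\widetilde{J}$ is a positive constant, $\widetilde{\Delta}\widetilde{J}=0$, and $-\widetilde{\Delta}\widetilde{J}+\tfrac{n-4}{2}\widetilde{J}^{2}=\tfrac{n-4}{2}\widetilde{J}^{2}>0$ for $n\geq 5$. However, this is not the route the paper takes, and the paper even flags the distinction: right after the statement it says the proposition ``is clearly a consequence of the solution to the Yamabe problem'' and that the Appendix will give an \emph{elementary} proof. The appendix proof avoids the full resolution of the Yamabe problem (which in general rests on Aubin's and Schoen's work, including the positive mass theorem) and instead solves the subcritical equation $Lu=u^{p}$ for $1<p<\tfrac{n+2}{n-2}$, which only requires the compact embedding $H^{1}(M)\subset L^{p+1}(M)$ together with $R>0$ after a first conformal change. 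Setting $\widetilde{g}=u^{\frac{4}{n-2}}g$ one has $\widetilde{R}=u^{p-\frac{n+2}{n-2}}>0$, and a direct computation of $-\widetilde{\Delta}\widetilde{R}+\tfrac{n-4}{4(n-1)}\widetilde{R}^{2}$ shows all terms are nonnegative (and the leading one positive) once $p$ is chosen with $\max\{1,\tfrac{6}{n-2}\}<p<\tfrac{n+2}{n-2}$, which is possible precisely because $n\geq 5$. So your proof buys brevity at the cost of invoking a much deeper theorem, while the paper's proof buys self-containedness: every ingredient is soft functional analysis plus an explicit computation. Your closing remark about the first eigenfunction of $L$ is consistent with this, but note that the paper's fix for the ``further conformal adjustment'' you mention is exactly the subcritical exponent trick, not a constant-scalar-curvature metric.
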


Note this proposition is clearly a consequence of the solution to the Yamabe
problem (\cite{LP}). In the Appendix we will provide an elementary proof. In
view of Proposition \ref{prop2.1} we can find $\widetilde{g}=u_{0}^{\frac{4}{%
n-4}}g$ such that%
\begin{equation}
-\widetilde{\Delta }\widetilde{J}+\frac{n-4}{2}\widetilde{J}^{2}>0,\quad 
\widetilde{J}>0.  \label{eq2.11}
\end{equation}%
Since%
\begin{equation}
\widetilde{Q}-\lambda \sigma _{2}\left( \widetilde{A}\right) =-\widetilde{%
\Delta }\widetilde{J}+\frac{n-4}{2}\widetilde{J}^{2}+\left( 4-\lambda
\right) \sigma _{2}\left( \widetilde{A}\right) ,  \label{eq2.12}
\end{equation}%
we can find $0<\lambda _{0}<4$ close enough to $4$ such that $\widetilde{Q}%
-\lambda _{0}\sigma _{2}\left( \widetilde{A}\right) >0$. In particular $%
\widetilde{g}=u_{0}^{\frac{4}{n-4}}g$ is a solution of (\ref{eq2.4}), with%
\begin{equation}
\chi =\left( \widetilde{Q}-\lambda _{0}\sigma _{2}\left( \widetilde{A}%
\right) \right) u_{0}^{\frac{n+4}{n-4}}  \label{eq2.13}
\end{equation}%
Define%
\begin{equation}
\Sigma =\left\{ 0\leq \lambda \leq \lambda _{0}:\exists u\in C^{\infty
}\left( M\right) ,u>0,\text{ satisfying (\ref{eq2.4}) and }\widetilde{J}%
>0\right\} .  \label{eq2.14}
\end{equation}%
It is clear that $\lambda _{0}\in \Sigma $. Indeed in this case $u_{0}$ is a
solution. On the other hand if we can show $0\in \Sigma $, then it follows
there exists a metric $\widetilde{g}\in \left[ g\right] $ with $\widetilde{J}%
>0$ and $\widetilde{Q}>0$. To achieve this we will show $\Sigma $ is both
open and closed.

To prove that $\Sigma $ is open, we consider the linearized operator. To
this end, define the map%
\begin{equation}
\widetilde{g}=u^{\frac{4}{n-4}}g\mapsto \mathcal{N}\left[ u\right] =%
\widetilde{Q}-\lambda \sigma _{2}\left( \widetilde{A}\right) -\chi u^{-\frac{%
n+4}{n-4}}.  \label{eq2.15}
\end{equation}%
Then $\mathcal{N}\left[ u\right] =0$ if and only if $\widetilde{g}$ is a
solution of (\ref{eq2.4}). Let $\widetilde{S}$ denote the linearization of $%
\mathcal{N}$ at $u$:%
\begin{equation}
\widetilde{S}\varphi =\left. \frac{d}{dt}\right\vert _{t=0}\mathcal{N}\left[
u+t\varphi \right] .  \label{eq2.16}
\end{equation}%
To compute $\widetilde{S}$ we use the standard formulas for the variation of
the $Q$ curvature and the Schouten tensor%
\begin{eqnarray}
\left. \frac{d}{dt}\right\vert _{t=0}Q_{\left( 1+t\psi \right) g} &=&\frac{1%
}{2}P_{g}\psi -\frac{n+4}{4}Q_{g}\psi ,  \label{eq2.17} \\
\left. \frac{d}{dt}\right\vert _{t=0}\sigma _{2}\left( A_{\left( 1+t\psi
\right) g}\right) &=&-\frac{1}{2}J_{g}\Delta _{g}\psi +\frac{1}{2}g\left(
A_{g},D_{g}^{2}\psi \right) -2\sigma _{2}\left( A_{g}\right) \psi .
\label{eq2.18}
\end{eqnarray}%
In our setting, using%
\begin{equation}
\left( u+t\varphi \right) ^{\frac{4}{n-4}}g=\left( 1+tu^{-1}\varphi \right)
^{\frac{4}{n-4}}\widetilde{g}  \label{eq2.19}
\end{equation}%
and%
\begin{equation}
\left. \frac{d}{dt}\right\vert _{t=0}\left( 1+tu^{-1}\varphi \right) ^{\frac{%
4}{n-4}}=\frac{4}{n-4}u^{-1}\varphi ,  \label{eq2.20}
\end{equation}%
we see

\begin{equation}
\widetilde{S}\varphi =\frac{2}{n-4}\widetilde{H}\left( u^{-1}\varphi \right)
,  \label{eq2.21}
\end{equation}%
where%
\begin{eqnarray}
&&\widetilde{H}\varphi  \label{eq2.22} \\
&=&\widetilde{P}\varphi -\frac{n+4}{2}\widetilde{Q}\varphi +\lambda \left( 
\widetilde{J}\widetilde{\Delta }\varphi -\widetilde{g}\left( \widetilde{A},%
\widetilde{D}^{2}\varphi \right) +4\sigma _{2}\left( \widetilde{A}\right)
\varphi \right) +\frac{n+4}{2}\chi u^{-\frac{n+4}{n-4}}\varphi .  \notag
\end{eqnarray}

The openness of $\Sigma $ follows from implicit function theorem and
standard elliptic theory, together with the following lemma:

\begin{lemma}
\label{lem2.1}Let $\left( M,g\right) $ be a smooth compact Riemannian
manifold with dimension $n\geq 6$. If $0\leq \lambda \leq 4,$%
\begin{equation}
Q-\lambda \sigma _{2}\left( A\right) >0,\quad J>0,  \label{eq2.23}
\end{equation}%
then the operator%
\begin{eqnarray}
&&H\varphi  \label{eq2.24} \\
&=&P\varphi -\frac{n+4}{2}Q\varphi +\lambda \left( J\Delta \varphi
-A_{ij}\varphi _{ij}+4\sigma _{2}\left( A\right) \varphi \right) +\frac{n+4}{%
2}\left( Q-\lambda \sigma _{2}\left( A\right) \right) \varphi  \notag
\end{eqnarray}%
is positive definite.
\end{lemma}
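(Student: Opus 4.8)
The plan is to prove that the symmetric bilinear form $\varphi\mapsto\int_M H\varphi\cdot\varphi\,d\mu$ is positive definite on $H^{2}(M)$; since $H$ is a lower order perturbation of $\Delta^{2}$ this gives $\ker H=0$ and hence, by standard elliptic theory, $H$ is invertible, which is exactly what the implicit function theorem argument for the openness of $\Sigma$ needs.

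First I would simplify $H$. Collecting the zeroth order terms in (\ref{eq2.24}), the two $\tfrac{n+4}{2}Q\varphi$ contributions cancel and
\[
H\varphi =P\varphi +\lambda \bigl(J\Delta \varphi -A_{ij}\varphi _{ij}\bigr)-\tfrac{n-4}{2}\lambda \,\sigma _{2}(A)\,\varphi .
\]
Using the expression (\ref{eq1.11}) for $\int_M P\varphi \cdot \varphi \,d\mu$, the contracted Bianchi identity in the form $\nabla ^{j}A_{ij}=\nabla _{i}J$, and two integrations by parts, one finds $\int_M (J\Delta \varphi -A_{ij}\varphi _{ij})\varphi \,d\mu =\int_M (A(\nabla \varphi ,\nabla \varphi )-J|\nabla \varphi |^{2})\,d\mu$, whence
\begin{align*}
\int_M H\varphi \cdot \varphi \,d\mu =\int_M \Bigl(&(\Delta \varphi )^{2}-(4-\lambda )A(\nabla \varphi ,\nabla \varphi )\\
&+(n-2-\lambda )J|\nabla \varphi |^{2}+\tfrac{n-4}{2}\bigl(Q-\lambda \sigma _{2}(A)\bigr)\varphi ^{2}\Bigr)\,d\mu .
\end{align*}

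Under (\ref{eq2.23}) and $n\geq 6$ the zeroth order term is strictly positive and the $|\nabla \varphi |^{2}$ term is nonnegative (since $\lambda \leq 4\leq n-2$ and $J>0$), so the only term of indefinite sign is $-(4-\lambda )A(\nabla \varphi ,\nabla \varphi )$. To absorb it I would use the integrated Bochner formula $\int_M (\Delta \varphi )^{2}\,d\mu =\int_M |D^{2}\varphi |^{2}\,d\mu +\int_M Rc(\nabla \varphi ,\nabla \varphi )\,d\mu$ together with $Rc=(n-2)A+Jg$, which for any real $s$ yields
\[
\int_M (\Delta \varphi )^{2}=s\int_M (\Delta \varphi )^{2}+(1-s)\Bigl(\int_M |D^{2}\varphi |^{2}+(n-2)\int_M A(\nabla \varphi ,\nabla \varphi )+\int_M J|\nabla \varphi |^{2}\Bigr).
\]
Choosing $s$ so that $(1-s)(n-2)=4-\lambda$, i.e.
\[
s=\frac{n-6+\lambda }{n-2},
\]
cancels the $A(\nabla \varphi ,\nabla \varphi )$ term exactly. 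The hypotheses $n\geq 6$ and $0\leq \lambda \leq 4$ are precisely what guarantee $s\in [0,1]$ — this is where the dimensional restriction enters the argument — and after the substitution
\begin{align*}
\int_M H\varphi \cdot \varphi \,d\mu =\;&s\int_M (\Delta \varphi )^{2}\,d\mu +(1-s)\int_M |D^{2}\varphi |^{2}\,d\mu \\
&+\Bigl(\tfrac{4-\lambda }{n-2}+n-2-\lambda \Bigr)\int_M J|\nabla \varphi |^{2}\,d\mu +\tfrac{n-4}{2}\int_M \bigl(Q-\lambda \sigma _{2}(A)\bigr)\varphi ^{2}\,d\mu .
\end{align*}

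Finally I would note that every coefficient on the right is nonnegative — $s\geq 0$ and $1-s\geq 0$ by the choice of $s$, $\tfrac{4-\lambda }{n-2}+n-2-\lambda \geq 0$ because $\lambda \leq 4$ and $n\geq 6$, and $J>0$ — while $\tfrac{n-4}{2}(Q-\lambda \sigma _{2}(A))$ is strictly positive by (\ref{eq2.23}) and $n\geq 6$; hence
\[
\int_M H\varphi \cdot \varphi \,d\mu \geq \frac{n-4}{2}\Bigl(\min_M \bigl(Q-\lambda \sigma _{2}(A)\bigr)\Bigr)\int_M \varphi ^{2}\,d\mu >0
\]
for every $\varphi \not\equiv 0$, which is the asserted positive definiteness. (For $\lambda =0$ this recovers the positivity $P>0$ of \cite{GM}.) The one idea with content is the Bochner substitution with the correct convex weight; everything else is integration by parts and bookkeeping, and the step most prone to a sign error is the divergence identity for the Schouten tensor together with the cancellation of the zeroth order terms, but there is no conceptual obstacle.
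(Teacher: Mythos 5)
Your proposal is correct and is essentially the paper's own argument: you derive the same quadratic form identity (\ref{eq2.25}), apply the same integrated Bochner formula (\ref{eq2.26}), and your choice of the weight $s=\frac{n-6+\lambda}{n-2}$ reproduces exactly the decomposition (\ref{eq2.27}) with nonnegative coefficients for $n\geq 6$, $0\leq\lambda\leq 4$. The only addition is your explicit verification of (\ref{eq2.25}) via $\nabla^{j}A_{ij}=\nabla_{i}J$, which the paper states without derivation and which you carry out correctly.
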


\begin{proof}
For any smooth function $\varphi $,%
\begin{eqnarray}
&&\int_{M}H\varphi \cdot \varphi d\mu  \label{eq2.25} \\
&=&\int_{M}\left[ \left( \Delta \varphi \right) ^{2}+\left( n-2-\lambda
\right) J\left\vert \nabla \varphi \right\vert ^{2}-\left( 4-\lambda \right)
A\left( \nabla \varphi ,\nabla \varphi \right) +\frac{n-4}{2}\left(
Q-\lambda \sigma _{2}\left( A\right) \right) \varphi ^{2}\right] d\mu . 
\notag
\end{eqnarray}%
Using the Bochner formula%
\begin{equation}
\int_{M}\left( \Delta \varphi \right) ^{2}d\mu =\int_{M}\left\vert
D^{2}\varphi \right\vert ^{2}d\mu +\int_{M}J\left\vert \nabla \varphi
\right\vert ^{2}d\mu +\left( n-2\right) \int_{M}A\left( \nabla \varphi
,\nabla \varphi \right) d\mu ,  \label{eq2.26}
\end{equation}%
we see%
\begin{eqnarray}
&&\int_{M}H\varphi \cdot \varphi d\mu  \label{eq2.27} \\
&=&\frac{n-6+\lambda }{n-2}\int_{M}\left( \Delta \varphi \right) ^{2}d\mu +%
\frac{4-\lambda }{n-2}\int_{M}\left\vert D^{2}\varphi \right\vert ^{2}d\mu 
\notag \\
&&+\frac{4-\lambda +\left( n-2\right) \left( n-2-\lambda \right) }{n-2}%
\int_{M}J\left\vert \nabla \varphi \right\vert ^{2}d\mu +\frac{n-4}{2}%
\int_{M}\left( Q-\lambda \sigma _{2}\left( A\right) \right) \varphi ^{2}d\mu
.  \notag
\end{eqnarray}%
Note for $n\geq 6$ and $0\leq \lambda \leq 4$, all coefficients before the
integral sign are nonnegative. As a consequence $H$ is positive definite.
\end{proof}

For $n=5$, note that the coefficient of $\int_{M}J\left\vert \nabla \varphi
\right\vert ^{2}d\mu $ is equal to $\frac{13-4\lambda }{3}$ and it is
negative when $\lambda $ is close to $4$.

\section{Apriori estimate\label{sec3}}

In this section we prove apriori estimate for smooth positive solutions to (%
\ref{eq2.4}) with positive scalar curvature for $0\leq \lambda \leq \lambda
_{0}$. An immediate consequence is that the set $\Sigma $ (see (\ref{eq2.14}%
)) is closed.

\begin{lemma}
\label{lem3.1}Assume $\left( M,g\right) $ is a smooth compact Riemannian
manifold with dimension $n\geq 6$. If $Y\left( M,g\right) >0$, $Y_{4}^{\ast
}\left( M,g\right) >0$ and $0\leq \lambda \leq \lambda _{0}<4$, then any
smooth positive solution $u$ to (\ref{eq2.4}) with $\widetilde{J}>0$
satisfies%
\begin{equation}
\left\Vert u\right\Vert _{L^{\frac{2n}{n-4}}}\leq c  \label{eq3.1}
\end{equation}%
and%
\begin{equation}
u\geq c>0.  \label{eq3.2}
\end{equation}%
Here $c$ is independent of $u$ and $\lambda $.
\end{lemma}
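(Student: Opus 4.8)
The plan is to prove (\ref{eq3.1}) by an integral argument playing the equation against the positivity of $Y_{4}^{\ast }$, and to prove (\ref{eq3.2}) by using the hypothesis $\widetilde{J}>0$ to reduce the fourth order problem to a positive second order supersolution, which turns the lower bound into a lower bound for $\int_{M}u\,d\mu $ that is then forced by the fixed positive source $\chi $.

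For (\ref{eq3.1}): put $V=\widetilde{\mu }(M)=\Vert u\Vert _{L^{2n/(n-4)}}^{2n/(n-4)}$ and note $u^{-\frac{n+4}{n-4}}\,d\widetilde{\mu }=u\,d\mu $ because $d\widetilde{\mu }=u^{2n/(n-4)}\,d\mu $. Integrating (\ref{eq2.4}) against $d\widetilde{\mu }$ gives $\int_{M}\widetilde{Q}\,d\widetilde{\mu }-\lambda \int_{M}\sigma _{2}(\widetilde{A})\,d\widetilde{\mu }=\int_{M}\chi u\,d\mu $, and integrating (\ref{eq1.16}) for $\widetilde{g}$ against $d\widetilde{\mu }$ (the $\widetilde{\Delta }\widetilde{J}$ term drops) gives $\int_{M}\widetilde{Q}\,d\widetilde{\mu }=\frac{n-4}{2}\int_{M}\widetilde{J}^{\,2}\,d\widetilde{\mu }+4\int_{M}\sigma _{2}(\widetilde{A})\,d\widetilde{\mu }$. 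Since $\widetilde{J}>0$ forces $\widetilde{R}>0$, definition (\ref{eq1.13}) applies to $\widetilde{g}$, so $\int_{M}\widetilde{Q}\,d\widetilde{\mu }\geq \frac{2}{n-4}Y_{4}^{\ast }(M,g)V^{\frac{n-4}{n}}$. Eliminating $\int_{M}\sigma _{2}(\widetilde{A})\,d\widetilde{\mu }$ between the first two identities and using $\int_{M}\widetilde{J}^{\,2}\,d\widetilde{\mu }\geq 0$ and $0\leq \lambda \leq \lambda _{0}<4$ yields $\int_{M}\chi u\,d\mu \geq \frac{(4-\lambda _{0})Y_{4}^{\ast }(M,g)}{2(n-4)}V^{\frac{n-4}{n}}$, while H\"{o}lder's inequality gives $\int_{M}\chi u\,d\mu \leq \Vert \chi \Vert _{L^{2n/(n+4)}}V^{\frac{n-4}{2n}}$; comparing the powers of $V$ forces $V\leq c$, which is (\ref{eq3.1}). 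As a by-product $E(u)=\int_{M}Pu\cdot u\,d\mu =\frac{n-4}{2}\int_{M}\widetilde{Q}\,d\widetilde{\mu }$ is bounded, so by (\ref{eq1.11}), interpolation and (\ref{eq3.1}) one gets a uniform bound $\Vert u\Vert _{H^{2}(M)}\leq c$.

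For (\ref{eq3.2}): since $Y(M,g)>0$ we may replace $g$ by a conformal metric with $J_{g}>0$ (this only multiplies $\chi $ by a fixed positive smooth function and changes neither $\Vert u\Vert _{L^{2n/(n-4)}}$ nor the meaning of (\ref{eq3.2})). Then (\ref{eq2.9}) reads $-\Delta u+\frac{n-4}{2}J_{g}u>\frac{2}{n-4}u^{-1}|\nabla u|^{2}\geq 0$, so $u$ is a positive supersolution of $L_{0}:=-\Delta +\frac{n-4}{2}J_{g}$; as $L_{0}$ has positive zeroth order coefficient it is invertible and its Green's function $G_{0}$ is positive, hence bounded below by some $c_{0}>0$ on the compact $M\times M$, so
\[
u(x)=\int_{M}G_{0}(x,y)(L_{0}u)(y)\,d\mu (y)\ \geq \ c_{0}\int_{M}L_{0}u\,d\mu \ =\ c_{0}\,\tfrac{n-4}{2}\int_{M}J_{g}u\,d\mu \ \geq \ c_{1}\int_{M}u\,d\mu
\]
for every $x\in M$. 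Thus it suffices to show $\int_{M}u\,d\mu \geq c>0$. Integrating (\ref{eq2.8}) over $M$ kills $\Delta ^{2}u$ and $\func{div}(\cdots )$ and leaves $\int_{M}Q_{g}u\,d\mu -\lambda \int_{M}\sigma _{2}(\widetilde{A})u^{\frac{n+4}{n-4}}\,d\mu =\int_{M}\chi \,d\mu =:B_{0}>0$, a fixed positive constant. If $\int_{M}u\,d\mu $ were arbitrarily small along a sequence of solutions, then (using (\ref{eq3.1}), interpolation, and $E(u)=\frac{n-4}{2}\int_{M}\widetilde{Q}\,d\widetilde{\mu }\to 0$) one would have $\Vert u\Vert _{H^{2}(M)}\to 0$, and the identity above would force $\lambda \int_{M}\sigma _{2}(\widetilde{A})u^{\frac{n+4}{n-4}}\,d\mu \to -B_{0}\neq 0$; but $\sigma _{2}(\widetilde{A})u^{\frac{n+4}{n-4}}$ is $u$ times the bracket in (\ref{eq2.8}), which has a divergence structure, so integrating by parts to move derivatives off $u$ and invoking the $H^{2}$-bound and $u\geq c_{1}\int_{M}u\,d\mu $ should bound $|\lambda \int_{M}\sigma _{2}(\widetilde{A})u^{\frac{n+4}{n-4}}d\mu |$ by a quantity that vanishes with $\Vert u\Vert _{H^{2}}$, a contradiction, giving $\int_{M}u\,d\mu \geq c$. (When $\lambda =0$ this is immediate: (\ref{eq2.4}) becomes the linear equation $Pu=\frac{n-4}{2}\chi $ with $\widetilde{R},\widetilde{Q}>0$, so $\ker P=0$ and $G_{P}>0$ by \cite{GM}, whence $u=\frac{n-4}{2}P^{-1}\chi $ is bounded below by a fixed positive constant.)

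The parts I expect to be routine are (\ref{eq3.1}) and the reduction of (\ref{eq3.2}) to a lower bound on $\int_{M}u\,d\mu $; the main obstacle is the last step, namely making the divergence-structure estimate of the $\sigma _{2}(\widetilde{A})$-integral precise and uniform in $\lambda \in [0,\lambda _{0}]$. This is exactly where the negative exponent in (\ref{eq2.4}) (equivalently the fixed bounded source on the right of (\ref{eq2.8})) and the variational/divergence structure of the path must be used, and where the restriction $n\geq 6$ re-enters through the Sobolev and interpolation exponents.
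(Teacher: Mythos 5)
Your bound (\ref{eq3.1}) is correct and is essentially the paper's own argument: integrate (\ref{eq2.4}) against $d\widetilde{\mu }$, use the identity $\int_{M}\widetilde{Q}d\widetilde{\mu }=4\int_{M}\sigma _{2}(\widetilde{A})d\widetilde{\mu }+\frac{n-4}{2}\int_{M}\widetilde{J}^{2}d\widetilde{\mu }$, the definition of $Y_{4}^{\ast }$ (legitimate since $\widetilde{J}>0$), and H\"{o}lder on $\int_{M}\chi u\,d\mu $. No issue there.

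The lower bound (\ref{eq3.2}) is where the real content of the lemma lies, and your argument for it has a genuine gap at the decisive step. The reduction via the Green's function of $-\Delta +\frac{n-4}{2}J$ (after arranging $J>0$) to a lower bound on $\int_{M}u\,d\mu $ is fine, but your contradiction argument then needs $\bigl\vert \lambda \int_{M}\sigma _{2}(\widetilde{A})u^{\frac{n+4}{n-4}}d\mu \bigr\vert $ to vanish as $\Vert u\Vert _{H^{2}}\rightarrow 0$, and this is not justified and is unlikely to be salvageable as stated: written out as in (\ref{eq2.8}), that integrand contains $u^{-1}(\Delta u)^{2}$, $u^{-1}\vert D^{2}u\vert ^{2}$, $u^{-2}\vert \nabla u\vert ^{2}\Delta u$, $u^{-3}\vert \nabla u\vert ^{4}$, i.e.\ derivative terms weighted by \emph{inverse} powers of $u$. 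Smallness in $H^{2}$ gives no control of these, and the only pointwise lower bound you have, $u\geq c_{1}\int_{M}u\,d\mu $, is precisely what tends to zero in your contradiction scenario; integrating by parts cannot remove the inverse powers. You flag this yourself as the main obstacle, and the $\lambda =0$ parenthetical does not help for $\lambda \in (0,\lambda _{0}]$. The paper closes exactly this step differently: it multiplies (\ref{eq2.8}) by $u^{\alpha }$ with $\alpha =-(p+1)$, $p\gg 1$, integrates by parts, and uses the hypothesis $\widetilde{J}>0$ in the form (\ref{eq2.9}) together with $\lambda \leq \lambda _{0}<4$ to make the coefficients of the dangerous terms (the quartic gradient term and the cross term $\int u^{\alpha -2}\vert \nabla u\vert ^{2}\Delta u\,d\mu $) have favorable signs for large $\vert \alpha \vert $, absorbing the remaining lower-order terms by Cauchy--Schwarz; since $\chi \geq c>0$ this yields $\int_{M}u^{-p-1}d\mu \leq c\int_{M}u^{-p}d\mu $, hence $\Vert u^{-1}\Vert _{L^{p+1}}\leq c$. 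Then $U=u^{-1}$ satisfies $-\Delta U\leq \frac{n-4}{2}JU$ by (\ref{eq2.9}), and the local maximum principle (\cite[Theorem 8.17]{GT}) upgrades the $L^{p+1}$ bound to $U\leq c$, which is (\ref{eq3.2}). Without an argument of this type (testing against negative powers of $u$, or some other mechanism that genuinely exploits the negative exponent on the right of (\ref{eq2.4}) and the sign information from $\widetilde{J}>0$), your proof of (\ref{eq3.2}) is incomplete.
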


\begin{proof}
Let $\widetilde{g}=u^{\frac{4}{n-4}}g$. Using%
\begin{equation}
\int_{M}\widetilde{Q}d\widetilde{\mu }=4\int_{M}\sigma _{2}\left( \widetilde{%
A}\right) d\widetilde{\mu }+\frac{n-4}{2}\int_{M}\widetilde{J}^{2}d%
\widetilde{\mu },  \label{eq3.3}
\end{equation}%
and (\ref{eq2.4}) we get%
\begin{equation}
\left( 1-\frac{\lambda }{4}\right) \int_{M}\widetilde{Q}d\widetilde{\mu }+%
\frac{n-4}{8}\lambda \int_{M}\widetilde{J}^{2}d\widetilde{\mu }=\int_{M}\chi
u^{-\frac{n+4}{n-4}}d\widetilde{\mu }=\int_{M}\chi ud\mu .  \label{eq3.4}
\end{equation}%
By the definition of $Y_{4}^{\ast }\left( M,g\right) $ (see (\ref{eq1.13}))
we have%
\begin{eqnarray}
\left\Vert u\right\Vert _{L^{\frac{2n}{n-4}}}^{2} &=&\widetilde{\mu }\left(
M\right) ^{\frac{n-4}{n}}  \label{eq3.5} \\
&\leq &\frac{n-4}{2}\frac{1}{Y_{4}^{\ast }\left( M,g\right) }\int_{M}%
\widetilde{Q}d\widetilde{\mu }  \notag \\
&\leq &c\int_{M}\chi ud\mu  \notag \\
&\leq &c\left\Vert u\right\Vert _{L^{\frac{2n}{n-4}}}.  \notag
\end{eqnarray}%
Hence%
\begin{equation}
\left\Vert u\right\Vert _{L^{\frac{2n}{n-4}}}\leq c.  \label{eq3.6}
\end{equation}

Multiplying both sides of equation (\ref{eq2.8}) by $u^{\alpha }$ and doing
integration by parts we get%
\begin{eqnarray}
&&\frac{n-4}{2}\int_{M}\chi u^{\alpha }d\mu  \label{eq3.7} \\
&=&\alpha \int_{M}u^{\alpha -1}\left( \Delta u\right) ^{2}d\mu +\left[
\alpha \left( \alpha -1\right) +\frac{3\alpha -1}{2\left( n-4\right) }%
\lambda \right] \int_{M}u^{\alpha -2}\left\vert \nabla u\right\vert
^{2}\Delta ud\mu  \notag \\
&&+\frac{\left( n-4\right) \alpha ^{2}-\left( n-8\right) \alpha -2}{2\left(
n-4\right) ^{2}}\lambda \int_{M}u^{\alpha -3}\left\vert \nabla u\right\vert
^{4}d\mu +\left( n-2-\lambda \right) \alpha \int_{M}Ju^{\alpha -1}\left\vert
\nabla u\right\vert ^{2}d\mu  \notag \\
&&-\left( 4-\lambda \right) \alpha \int_{M}u^{\alpha -1}A\left( \nabla
u,\nabla u\right) d\mu +\frac{n-4}{2}\int_{M}\left( Q-\lambda \sigma
_{2}\left( A\right) \right) u^{\alpha +1}d\mu .  \notag
\end{eqnarray}%
If%
\begin{equation}
\alpha \left( \alpha -1\right) +\frac{3\alpha -1}{2\left( n-4\right) }%
\lambda >0,  \label{eq3.8}
\end{equation}%
(this happens when $\left\vert \alpha \right\vert $ is large enough), then
using (\ref{eq2.9}) we get%
\begin{eqnarray}
&&\frac{n-4}{2}\int_{M}\chi u^{\alpha }d\mu  \label{eq3.9} \\
&\leq &\alpha \int_{M}u^{\alpha -1}\left( \Delta u\right) ^{2}d\mu -\left[ 
\frac{4-\lambda }{2\left( n-4\right) }\alpha ^{2}+\frac{16-2\lambda -n\left(
4-\lambda \right) }{2\left( n-4\right) ^{2}}\alpha \right] \int_{M}u^{\alpha
-3}\left\vert \nabla u\right\vert ^{4}d\mu  \notag \\
&&+\left( \frac{n-4}{2}\alpha ^{2}+\frac{2n-\lambda }{4}\alpha -\frac{%
\lambda }{4}\right) \int_{M}Ju^{\alpha -1}\left\vert \nabla u\right\vert
^{2}d\mu -\left( 4-\lambda \right) \alpha \int_{M}u^{\alpha -1}A\left(
\nabla u,\nabla u\right) d\mu  \notag \\
&&+\frac{n-4}{2}\int_{M}\left( Q-\lambda \sigma _{2}\left( A\right) \right)
u^{\alpha +1}d\mu .  \notag
\end{eqnarray}%
Now let $\alpha =-\left( p+1\right) $ with $p\gg 1$, using $\lambda \leq
\lambda _{0}<4$ and the fact for all $\varepsilon >0$,%
\begin{equation*}
u^{-p-2}\left\vert \nabla u\right\vert ^{2}\leq \varepsilon
u^{-p-4}\left\vert \nabla u\right\vert ^{4}+\frac{1}{4\varepsilon }u^{-p},
\end{equation*}%
we get%
\begin{eqnarray}
\int_{M}u^{-p-1}d\mu &\leq &-c\int_{M}u^{-p-2}\left( \Delta u\right)
^{2}d\mu -c\int_{M}u^{-p-4}\left\vert \nabla u\right\vert ^{4}d\mu
\label{eq3.10} \\
&&+c\int_{M}u^{-p}d\mu .  \notag
\end{eqnarray}%
Hence%
\begin{equation}
\left\Vert u^{-1}\right\Vert _{L^{p+1}}^{p+1}\leq c\left\Vert
u^{-1}\right\Vert _{L^{p}}^{p}\leq c\left\Vert u^{-1}\right\Vert
_{L^{p+1}}^{p}.  \label{eq3.11}
\end{equation}%
It follows that%
\begin{equation}
\left\Vert u^{-1}\right\Vert _{L^{p+1}}\leq c.  \label{eq3.12}
\end{equation}%
To continue let $u^{-1}=U$, then the inequality (\ref{eq2.9}) implies%
\begin{eqnarray}
-\Delta U &<&-\frac{2\left( n-3\right) }{n-4}U^{-1}\left\vert \nabla
U\right\vert ^{2}+\frac{n-4}{2}JU  \label{eq3.13} \\
&\leq &\frac{n-4}{2}JU.  \notag
\end{eqnarray}%
By \cite[Theorem 8.17 on p194]{GT}, (\ref{eq3.12}) and (\ref{eq3.13})
together imply $U\leq c$, in another word%
\begin{equation}
u\geq c>0.  \label{eq3.14}
\end{equation}
\end{proof}

To derive further estimates on $u$, we denote%
\begin{equation}
v=u^{-q-1}\left( -\Delta u+\frac{n-4}{2}Ju\right) ,  \label{eq3.15}
\end{equation}%
here $q\geq 0$ is a number to be determined. It follows from (\ref{eq2.9})
that%
\begin{equation}
v>\frac{2}{n-4}u^{-q-2}\left\vert \nabla u\right\vert ^{2}.  \label{eq3.16}
\end{equation}%
Since%
\begin{equation}
\Delta u=\frac{n-4}{2}Ju-u^{q+1}v,  \label{eq3.17}
\end{equation}%
we see that under local orthonormal frame with respect to $g$,%
\begin{eqnarray}
\Delta ^{2}u &=&-u^{q+1}\Delta v-2\left( q+1\right) u^{q}u_{i}v_{i}+\left(
q+1\right) u^{2q+1}v^{2}  \label{eq3.18} \\
&&-q\left( q+1\right) u^{q-1}\left\vert \nabla u\right\vert ^{2}v-\frac{n-4}{%
2}\left( q+2\right) Ju^{q+1}v  \notag \\
&&+\left( n-4\right) J_{i}u_{i}+\frac{\left( n-4\right) ^{2}}{4}J^{2}u+\frac{%
n-4}{2}\Delta J\cdot u.  \notag
\end{eqnarray}%
Plug (\ref{eq3.17}) and (\ref{eq3.18}) into (\ref{eq2.8}) we get%
\begin{eqnarray}
&&-\Delta v-2\left( q+1\right) u^{-1}u_{i}v_{i}+\left( q+1\right)
u^{q}v^{2}-q\left( q+1\right) u^{-2}\left\vert \nabla u\right\vert ^{2}v
\label{eq3.19} \\
&&+\left( -\frac{n-4}{2}q+2\right)
Jv+4u^{-q-1}A_{ij}u_{ij}+2u^{-q-1}J_{i}u_{i}-\left( n-4\right) \left\vert
A\right\vert ^{2}u^{-q}  \notag \\
&=&\frac{\lambda }{n-4}u^{q}v^{2}-\frac{2\lambda }{\left( n-4\right) ^{2}}%
u^{-2}\left\vert \nabla u\right\vert ^{2}v-\frac{\lambda }{n-4}%
u^{-q-2}\left\vert D^{2}u\right\vert ^{2}+\frac{2\left( n-2\right) \lambda }{%
\left( n-4\right) ^{2}}u^{-q-3}u_{ij}u_{i}u_{j}  \notag \\
&&+\lambda u^{-q-1}A_{ij}u_{ij}-\frac{\left( n-1\right) \lambda }{\left(
n-4\right) ^{2}}u^{-q-4}\left\vert \nabla u\right\vert ^{4}-\frac{\left(
n-2\right) \lambda }{n-4}u^{-q-2}A_{ij}u_{i}u_{j}-\frac{\left( n-4\right)
\lambda }{4}\left\vert A\right\vert ^{2}u^{-q}  \notag \\
&&+\frac{n-4}{2}\chi u^{-q-1}.  \notag
\end{eqnarray}%
Multiplying $v^{\alpha }$ on both sides and doing integration by parts we
have%
\begin{eqnarray}
&&\alpha v^{\alpha -1}\left\vert \nabla v\right\vert ^{2}+\frac{\left(
\alpha -1\right) \left( q+1\right) }{\alpha +1}u^{q}v^{\alpha +2}-\left(
q+1\right) \left( q+\frac{2}{\alpha +1}\right) u^{-2}\left\vert \nabla
u\right\vert ^{2}v^{\alpha +1}  \label{eq3.20} \\
&&-4\alpha u^{-q-1}A_{ij}u_{i}v_{j}v^{\alpha -1}+4\left( q+1\right)
u^{-q-2}A_{ij}u_{i}u_{j}v^{\alpha }+\left[ -\frac{\left( n-4\right) \left(
\alpha -1\right) }{2\left( \alpha +1\right) }q+\frac{2\alpha +n-2}{\alpha +1}%
\right] Jv^{\alpha +1}  \notag \\
&&-2u^{-q-1}J_{i}u_{i}v^{\alpha }-\left( n-4\right) \left\vert A\right\vert
^{2}u^{-q}v^{\alpha }  \notag \\
&\sim &\frac{\lambda }{n-4}u^{q}v^{\alpha +2}-\frac{2\lambda }{\left(
n-4\right) ^{2}}u^{-2}\left\vert \nabla u\right\vert ^{2}v^{\alpha +1}-\frac{%
\lambda }{n-4}u^{-q-2}\left\vert D^{2}u\right\vert ^{2}v^{\alpha }+\frac{%
2\left( n-2\right) \lambda }{\left( n-4\right) ^{2}}%
u^{-q-3}u_{ij}u_{i}u_{j}v^{\alpha }  \notag \\
&&-\frac{\left( n-1\right) \lambda }{\left( n-4\right) ^{2}}%
u^{-q-4}\left\vert \nabla u\right\vert ^{4}v^{\alpha }-\lambda \alpha
u^{-q-1}A_{ij}u_{i}v_{j}v^{\alpha -1}+\lambda \left( q-\frac{2}{n-4}\right)
u^{-q-2}A_{ij}u_{i}u_{j}v^{\alpha }  \notag \\
&&-\lambda u^{-q-1}J_{i}u_{i}v^{\alpha }-\frac{\left( n-4\right) \lambda }{4}%
\left\vert A\right\vert ^{2}u^{-q}v^{\alpha }+\frac{n-4}{2}\chi
u^{-q-1}v^{\alpha }.  \notag
\end{eqnarray}%
Here we write $\Phi \sim \Psi $ to mean $\int_{M}\Phi d\mu =\int_{M}\Psi
d\mu $. In view of (\ref{eq3.2}) and (\ref{eq3.16}) we see%
\begin{eqnarray}
&&\alpha \int_{M}v^{\alpha -1}\left\vert \nabla v\right\vert ^{2}d\mu +\left[
\frac{\left( \alpha -1\right) \left( q+1\right) }{\alpha +1}-\frac{\lambda }{%
n-4}\right] \int_{M}u^{q}v^{\alpha +2}d\mu  \label{eq3.21} \\
&\leq &\left[ \left( q+1\right) \left( q+\frac{2}{\alpha +1}\right) -\frac{%
2\lambda }{\left( n-4\right) ^{2}}\right] \int_{M}u^{-2}\left\vert \nabla
u\right\vert ^{2}v^{\alpha +1}d\mu -\frac{\lambda }{n-4}\int_{M}u^{-q-2}%
\left\vert D^{2}u\right\vert ^{2}v^{\alpha }d\mu  \notag \\
&&+\frac{2\left( n-2\right) \lambda }{\left( n-4\right) ^{2}}%
\int_{M}u^{-q-3}u_{ij}u_{i}u_{j}v^{\alpha }d\mu -\frac{\left( n-1\right)
\lambda }{\left( n-4\right) ^{2}}\int_{M}u^{-q-4}\left\vert \nabla
u\right\vert ^{4}v^{\alpha }d\mu  \notag \\
&&+c\alpha \int_{M}v^{\alpha -\frac{1}{2}}\left\vert \nabla v\right\vert
d\mu +c\int_{M}v^{\alpha +1}d\mu +c\int_{M}v^{\alpha }d\mu .  \notag
\end{eqnarray}%
To continue we split $D^{2}u$ into its trace component $\frac{\Delta u}{n}g$
and trace-free component $\Theta $,%
\begin{equation}
D^{2}u=\Theta +\frac{\Delta u}{n}g.  \label{eq3.22}
\end{equation}%
Consequently by (\ref{eq3.17}),%
\begin{eqnarray}
\left\vert D^{2}u\right\vert ^{2} &=&\left\vert \Theta \right\vert ^{2}+%
\frac{1}{n}u^{2q+2}v^{2}-\frac{n-4}{n}Ju^{q+2}v+\frac{\left( n-4\right) ^{2}%
}{4n}J^{2}u^{2},  \label{eq3.23} \\
u_{ij}u_{i}u_{j} &=&-\frac{1}{n}u^{q+1}\left\vert \nabla u\right\vert ^{2}v+%
\frac{n-4}{2n}Ju\left\vert \nabla u\right\vert ^{2}+\Theta _{ij}u_{i}u_{j}.
\label{eq3.24}
\end{eqnarray}%
Plug these equalities into (\ref{eq3.21}), we get%
\begin{eqnarray}
&&\alpha \int_{M}v^{\alpha -1}\left\vert \nabla v\right\vert ^{2}d\mu +\left[
\frac{\left( \alpha -1\right) \left( q+1\right) }{\alpha +1}-\frac{\left(
n-1\right) \lambda }{n\left( n-4\right) }\right] \int_{M}u^{q}v^{\alpha
+2}d\mu  \label{eq3.25} \\
&\leq &\left[ \left( q+1\right) \left( q+\frac{2}{\alpha +1}\right) -\frac{%
4\left( n-1\right) \lambda }{n\left( n-4\right) ^{2}}\right]
\int_{M}u^{-2}\left\vert \nabla u\right\vert ^{2}v^{\alpha +1}d\mu  \notag \\
&&-\frac{\lambda }{n-4}\int_{M}u^{-q-2}v^{\alpha }\left( \left\vert \Theta
\right\vert ^{2}-\frac{2\left( n-2\right) }{n-4}u^{-1}\Theta _{ij}u_{i}u_{j}+%
\frac{n-1}{n-4}u^{-2}\left\vert \nabla u\right\vert ^{4}\right) d\mu  \notag
\\
&&+\frac{\left( n-2\right) \lambda }{n\left( n-4\right) }\int_{M}Ju^{-q-2}%
\left\vert \nabla u\right\vert ^{2}v^{\alpha }d\mu -\frac{\left( n-4\right)
\lambda }{4n}\int_{M}J^{2}u^{-q}v^{\alpha }d\mu  \notag \\
&&+c\alpha \int_{M}v^{\alpha -\frac{1}{2}}\left\vert \nabla v\right\vert
d\mu +c\int_{M}v^{\alpha +1}d\mu +c\int_{M}v^{\alpha }d\mu .  \notag
\end{eqnarray}%
By (\ref{eq3.16}) we have%
\begin{equation}
\frac{\left( n-2\right) \lambda }{n\left( n-4\right) }\int_{M}Ju^{-q-2}\left%
\vert \nabla u\right\vert ^{2}v^{\alpha }d\mu \leq c\int_{M}v^{\alpha
+1}d\mu .  \label{eq3.26}
\end{equation}%
This and%
\begin{equation}
c\alpha \int_{M}v^{\alpha -\frac{1}{2}}\left\vert \nabla v\right\vert d\mu
\leq \frac{\alpha }{2}\int_{M}v^{\alpha -1}\left\vert \nabla v\right\vert
^{2}d\mu +\frac{c^{2}\alpha }{2}\int_{M}v^{\alpha }  \label{eq3.27}
\end{equation}%
together with (\ref{eq3.25}) implies for $\alpha \geq 1$,%
\begin{eqnarray}
&&\frac{\alpha }{2}\int_{M}v^{\alpha -1}\left\vert \nabla v\right\vert
^{2}d\mu +\left[ \frac{\left( \alpha -1\right) \left( q+1\right) }{\alpha +1}%
-\frac{\left( n-1\right) \lambda }{n\left( n-4\right) }\right]
\int_{M}u^{q}v^{\alpha +2}d\mu  \label{eq3.28} \\
&\leq &\left[ \left( q+1\right) \left( q+\frac{2}{\alpha +1}\right) -\frac{%
4\left( n-1\right) \lambda }{n\left( n-4\right) ^{2}}\right]
\int_{M}u^{-2}\left\vert \nabla u\right\vert ^{2}v^{\alpha +1}d\mu  \notag \\
&&-\frac{\lambda }{n-4}\int_{M}u^{-q-2}v^{\alpha }\left( \left\vert \Theta
\right\vert ^{2}-\frac{2\left( n-2\right) }{n-4}u^{-1}\Theta _{ij}u_{i}u_{j}+%
\frac{n-1}{n-4}u^{-2}\left\vert \nabla u\right\vert ^{4}\right) d\mu  \notag
\\
&&+c\int_{M}v^{\alpha +1}d\mu +c\alpha \int_{M}v^{\alpha }d\mu .  \notag
\end{eqnarray}%
Because $\Theta $ is trace-free,%
\begin{equation}
\left\vert \Theta _{ij}u_{i}u_{j}\right\vert \leq \sqrt{\frac{n-1}{n}}%
\left\vert \Theta \right\vert \left\vert \nabla u\right\vert ^{2},
\label{eq3.29}
\end{equation}%
hence%
\begin{eqnarray}
&&\left\vert \Theta \right\vert ^{2}-\frac{2\left( n-2\right) }{n-4}%
u^{-1}\Theta _{ij}u_{i}u_{j}+\frac{n-1}{n-4}u^{-2}\left\vert \nabla
u\right\vert ^{4}  \label{eq3.30} \\
&\geq &\left\vert \Theta \right\vert ^{2}-\frac{2\left( n-2\right) }{n-4}%
\sqrt{\frac{n-1}{n}}u^{-1}\left\vert \Theta \right\vert \left\vert \nabla
u\right\vert ^{2}+\frac{n-1}{n-4}u^{-2}\left\vert \nabla u\right\vert ^{4} 
\notag \\
&=&\left( \left\vert \Theta \right\vert -\frac{n-2}{n-4}\sqrt{\frac{n-1}{n}}%
u^{-1}\left\vert \nabla u\right\vert ^{2}\right) ^{2}-\frac{4\left(
n-1\right) }{n\left( n-4\right) ^{2}}u^{-2}\left\vert \nabla u\right\vert
^{4}  \notag \\
&\geq &-\frac{4\left( n-1\right) }{n\left( n-4\right) ^{2}}u^{-2}\left\vert
\nabla u\right\vert ^{4}.  \notag
\end{eqnarray}%
Plug this inequality into (\ref{eq3.28}) we get%
\begin{eqnarray}
&&\frac{\alpha }{2}\int_{M}v^{\alpha -1}\left\vert \nabla v\right\vert
^{2}d\mu  \label{eq3.31} \\
&&+\left[ \frac{\left( \alpha -1\right) \left( q+1\right) }{\alpha +1}-\frac{%
\left( n-1\right) \lambda }{n\left( n-4\right) }\right] \int_{M}u^{q}v^{%
\alpha +2}d\mu  \notag \\
&\leq &\left[ \left( q+1\right) \left( q+\frac{2}{\alpha +1}\right) -\frac{%
4\left( n-1\right) \lambda }{n\left( n-4\right) ^{2}}\right]
\int_{M}u^{-2}\left\vert \nabla u\right\vert ^{2}v^{\alpha +1}d\mu  \notag \\
&&+\frac{4\left( n-1\right) \lambda }{n\left( n-4\right) ^{3}}%
\int_{M}u^{-q-4}\left\vert \nabla u\right\vert ^{4}v^{\alpha }d\mu
+c\int_{M}v^{\alpha +1}d\mu +c\alpha \int_{M}v^{\alpha }d\mu .  \notag
\end{eqnarray}%
By (\ref{eq3.16}) we have%
\begin{equation}
\frac{4\left( n-1\right) \lambda }{n\left( n-4\right) ^{3}}%
\int_{M}u^{-q-4}\left\vert \nabla u\right\vert ^{4}v^{\alpha }d\mu \leq 
\frac{2\left( n-1\right) \lambda }{n\left( n-4\right) ^{2}}%
\int_{M}u^{-2}\left\vert \nabla u\right\vert ^{2}v^{\alpha +1}d\mu ,
\label{eq3.32}
\end{equation}%
hence%
\begin{eqnarray}
&&\frac{\alpha }{2}\int_{M}v^{\alpha -1}\left\vert \nabla v\right\vert
^{2}d\mu  \label{eq3.33} \\
&&+\left[ \frac{\left( \alpha -1\right) \left( q+1\right) }{\alpha +1}-\frac{%
\left( n-1\right) \lambda }{n\left( n-4\right) }\right] \int_{M}u^{q}v^{%
\alpha +2}d\mu  \notag \\
&\leq &\left[ \left( q+1\right) \left( q+\frac{2}{\alpha +1}\right) -\frac{%
2\left( n-1\right) \lambda }{n\left( n-4\right) ^{2}}\right]
\int_{M}u^{-2}\left\vert \nabla u\right\vert ^{2}v^{\alpha +1}d\mu  \notag \\
&&+c\int_{M}v^{\alpha +1}d\mu +c\alpha \int_{M}v^{\alpha }d\mu .  \notag
\end{eqnarray}%
By (\ref{eq3.16}) again,%
\begin{equation}
\int_{M}u^{-2}\left\vert \nabla u\right\vert ^{2}v^{\alpha +1}d\mu \leq 
\frac{n-4}{2}\int_{M}u^{q}v^{\alpha +2}d\mu ,  \label{eq3.34}
\end{equation}%
we get%
\begin{eqnarray}
&&\frac{\alpha }{2}\int_{M}v^{\alpha -1}\left\vert \nabla v\right\vert
^{2}d\mu  \label{eq3.35} \\
&&+\left[ \frac{\left( \alpha -1\right) \left( q+1\right) }{\alpha +1}-\frac{%
\left( n-1\right) \lambda }{n\left( n-4\right) }\right] \int_{M}u^{q}v^{%
\alpha +2}d\mu  \notag \\
&\leq &\max \left\{ \frac{n-4}{2}\left( q+1\right) \left( q+\frac{2}{\alpha
+1}\right) -\frac{\left( n-1\right) \lambda }{n\left( n-4\right) },0\right\}
\int_{M}u^{q}v^{\alpha +2}d\mu  \notag \\
&&+c\int_{M}v^{\alpha +1}d\mu +c\alpha \int_{M}v^{\alpha }d\mu .  \notag
\end{eqnarray}%
In another word%
\begin{eqnarray}
&&\frac{\alpha }{2}\int_{M}v^{\alpha -1}\left\vert \nabla v\right\vert
^{2}d\mu  \label{eq3.36} \\
&&+\min \left\{ \frac{\left( \alpha -1\right) \left( q+1\right) }{\alpha +1}-%
\frac{\left( n-1\right) \lambda }{n\left( n-4\right) },\left( q+1\right)
\left( -\frac{n-4}{2}q+\frac{\alpha -n+3}{\alpha +1}\right) \right\}
\int_{M}u^{q}v^{\alpha +2}d\mu  \notag \\
&\leq &c\int_{M}v^{\alpha +1}d\mu +c\alpha \int_{M}v^{\alpha }d\mu .  \notag
\end{eqnarray}%
Because $n\geq 6$, we see%
\begin{equation}
\frac{n-2}{n-4}>\frac{4\left( n-1\right) }{n\left( n-4\right) }.
\label{eq3.37}
\end{equation}%
Fix a $q\geq 0$ such that%
\begin{equation}
\frac{n-2}{n-4}>q+1>\frac{4\left( n-1\right) }{n\left( n-4\right) },
\label{eq3.38}
\end{equation}%
then for $\alpha $ large enough,%
\begin{equation}
\alpha \int_{M}v^{\alpha -1}\left\vert \nabla v\right\vert ^{2}d\mu
+\int_{M}u^{q}v^{\alpha +2}d\mu \leq c\int_{M}v^{\alpha +1}d\mu +c\alpha
\int_{M}v^{\alpha }d\mu .  \label{eq3.39}
\end{equation}%
By (\ref{eq3.2}) and (\ref{eq3.39}) we get%
\begin{equation}
\left\Vert v\right\Vert _{L^{\alpha +2}}^{\alpha +2}\leq c\left( \alpha
\right) \left( \left\Vert v\right\Vert _{L^{\alpha +2}}^{\alpha
+1}+\left\Vert v\right\Vert _{L^{\alpha +2}}^{\alpha }\right) .
\label{eq3.40}
\end{equation}%
Hence 
\begin{equation}
\left\Vert v\right\Vert _{L^{\alpha +2}}\leq c\left( \alpha \right)
\label{eq3.41}
\end{equation}%
for $\alpha $ large enough. To continue, we observe that for $\alpha $ large
enough,%
\begin{equation}
\int_{M}\left\vert \nabla v^{\frac{\alpha +1}{2}}\right\vert ^{2}d\mu \leq
c\alpha \int_{M}v^{\alpha +1}d\mu +c\alpha ^{2}\int_{M}v^{\alpha }d\mu .
\label{eq3.42}
\end{equation}%
Hence%
\begin{eqnarray}
\left\Vert v^{\frac{\alpha +1}{2}}\right\Vert _{L^{\frac{2n}{n-2}}}^{2}
&\leq &c\left( \int_{M}\left\vert \nabla v^{\frac{\alpha +1}{2}}\right\vert
^{2}d\mu +\int_{M}v^{\alpha +1}d\mu \right)  \label{eq3.43} \\
&\leq &c\alpha \left\Vert v\right\Vert _{L^{\alpha +1}}^{\alpha +1}+c\alpha
^{2}\left\Vert v\right\Vert _{L^{\alpha +1}}^{\alpha }.  \notag
\end{eqnarray}%
Replacing $\alpha +1$ by $\alpha $ we see%
\begin{equation}
\left\Vert v\right\Vert _{L^{\kappa \alpha }}^{\alpha }\leq c\alpha
\left\Vert v\right\Vert _{L^{\alpha }}^{\alpha }+c\alpha ^{2}\left\Vert
v\right\Vert _{L^{\alpha }}^{\alpha -1}.  \label{eq3.44}
\end{equation}%
Here%
\begin{equation}
\kappa =\frac{n}{n-2}.  \label{eq3.45}
\end{equation}%
Let%
\begin{equation}
b_{\alpha }=\max \left\{ \left\Vert v\right\Vert _{L^{\alpha }},1\right\} ,
\label{eq3.46}
\end{equation}%
then for $\alpha $ large enough, we have%
\begin{equation}
b_{\kappa \alpha }^{\alpha }\leq c\alpha ^{2}b_{\alpha }^{\alpha }.
\label{eq3.47}
\end{equation}%
It follows from iteration that for a fixed $\alpha _{0}$ large,%
\begin{equation}
b_{\kappa ^{k}\alpha _{0}}\leq cb_{\alpha _{0}}\leq c.  \label{eq3.48}
\end{equation}%
Hence%
\begin{equation}
\left\Vert v\right\Vert _{L^{\kappa ^{k}\alpha _{0}}}\leq c.  \label{eq3.49}
\end{equation}%
Letting $k\rightarrow \infty $ we see%
\begin{equation}
\left\Vert v\right\Vert _{L^{\infty }}\leq c.  \label{eq3.50}
\end{equation}%
Now we go back to the equation of $u$,%
\begin{equation}
-\Delta u+\frac{n-4}{2}Ju=vu^{q+1}.  \label{eq3.51}
\end{equation}%
Since $\left\Vert u\right\Vert _{L^{\frac{2n}{n-4}}}\leq c$ and $q<\frac{4}{%
n-4}$, standard bootstrap method tells us%
\begin{equation}
\left\Vert u\right\Vert _{L^{\infty }}\leq c.  \label{eq3.52}
\end{equation}%
Elliptic estimate tells us%
\begin{equation}
\left\Vert u\right\Vert _{W^{2,p}}\leq c\left( p\right)  \label{eq3.53}
\end{equation}%
for any $1<p<\infty $. This together with (\ref{eq2.8}) and (\ref{eq3.2})
imply%
\begin{equation}
\left\Vert u\right\Vert _{C^{k}}\leq c\left( k\right)  \label{eq3.54}
\end{equation}%
for all $k\in \mathbb{N}$.

\begin{proposition}
\label{prop3.1}Under the assumption of Theorem \ref{thm1.1}, the set $\Sigma 
$ is closed.
\end{proposition}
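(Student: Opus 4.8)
The plan is to prove closedness of $\Sigma$ by a compactness argument resting on the a priori estimates just established. Let $\lambda_{j}\in\Sigma$ with $\lambda_{j}\to\lambda_{\ast}$; since $\Sigma\subset[0,\lambda_{0}]$ we have $\lambda_{\ast}\in[0,\lambda_{0}]$, so it suffices to produce a positive smooth solution of (\ref{eq2.4}) at $\lambda=\lambda_{\ast}$ with $\widetilde{J}>0$. For each $j$ choose a positive smooth $u_{j}$ solving (\ref{eq2.4}) at parameter $\lambda_{j}$ with $\widetilde{J}_{j}>0$.

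First I would extract a limit. By Lemma \ref{lem3.1} one has $\|u_{j}\|_{L^{2n/(n-4)}}\le c$ and $u_{j}\ge c_{0}>0$ with $c,c_{0}$ independent of $j$, and the estimates derived after Lemma \ref{lem3.1} upgrade these to $\|u_{j}\|_{C^{k}}\le c(k)$ for every $k$, again uniformly in $j$ and $\lambda_{j}$. Thus $\{u_{j}\}$ is bounded in $C^{k}(M)$ for all $k$, and by Arzel\`a--Ascoli together with a diagonal argument a subsequence (not relabeled) converges in $C^{\infty}(M)$ to some $u_{\ast}\in C^{\infty}(M)$ with $u_{\ast}\ge c_{0}>0$.

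Next I would pass to the limit in the equation. Written in terms of the conformal factor as in (\ref{eq2.8}), equation (\ref{eq2.4}) is a fourth-order equation whose coefficients are built polynomially from $u$, $u^{-1}$, $\nabla^{j}u$ $(j\le 4)$, the parameter $\lambda$, and the fixed background data $g,J,A,\chi$; since $u_{j}\to u_{\ast}$ in $C^{\infty}$, $u_{\ast}$ is bounded away from zero, and $\lambda_{j}\to\lambda_{\ast}$, we may pass to the limit and conclude that $u_{\ast}$ solves (\ref{eq2.4}) with $\lambda=\lambda_{\ast}$. In particular, writing $\widetilde{g}_{\ast}=u_{\ast}^{4/(n-4)}g$ and recalling that $\chi>0$ (see (\ref{eq2.13})),
\begin{equation*}
\widetilde{Q}_{\ast}-\lambda_{\ast}\sigma_{2}(\widetilde{A}_{\ast})=\chi\,u_{\ast}^{-\frac{n+4}{n-4}}>0 .
\end{equation*}

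The remaining, and only delicate, point is that the limit should still satisfy the \emph{strict} inequality $\widetilde{J}_{\ast}>0$; from $\widetilde{J}_{j}>0$ and $C^{0}$ convergence we obtain only $\widetilde{J}_{\ast}\ge 0$. To recover strictness I would argue by contradiction at a minimum point, which sidesteps any need for a fourth-order maximum principle. Suppose $\widetilde{J}_{\ast}$ attains its minimum at a point $x_{0}$ with $\widetilde{J}_{\ast}(x_{0})=0$. By (\ref{eq2.12}),
\begin{equation*}
\widetilde{Q}_{\ast}-\lambda_{\ast}\sigma_{2}(\widetilde{A}_{\ast})=-\widetilde{\Delta}_{\ast}\widetilde{J}_{\ast}+\frac{n-4}{2}\widetilde{J}_{\ast}^{\,2}+(4-\lambda_{\ast})\,\sigma_{2}(\widetilde{A}_{\ast}),
\end{equation*}
and at $x_{0}$ we have $\widetilde{\Delta}_{\ast}\widetilde{J}_{\ast}(x_{0})\ge 0$, $\widetilde{J}_{\ast}(x_{0})^{2}=0$, while (\ref{eq1.15}) gives $\sigma_{2}(\widetilde{A}_{\ast})(x_{0})=-\frac{1}{2}|\widetilde{A}_{\ast}(x_{0})|^{2}\le 0$; since $\lambda_{\ast}\le\lambda_{0}<4$ the right-hand side at $x_{0}$ is $\le 0$, contradicting that the left-hand side equals $\chi(x_{0})u_{\ast}(x_{0})^{-\frac{n+4}{n-4}}>0$. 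Hence $\widetilde{J}_{\ast}>0$ on all of $M$, so $u_{\ast}$ exhibits $\lambda_{\ast}\in\Sigma$, and $\Sigma$ is closed. I expect this strictness step to be the main obstacle; the compactness and passage-to-the-limit steps are routine given the a priori bounds already in hand.
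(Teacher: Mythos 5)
Your proof is correct, and its overall structure coincides with the paper's: uniform bounds $\left\Vert u_{j}\right\Vert _{C^{k}}\leq c\left( k\right) $ and $u_{j}\geq c>0$ from Lemma \ref{lem3.1} and the estimates following it, extraction of a $C^{\infty }$-convergent subsequence, passage to the limit in (\ref{eq2.4}), and then strict positivity of the limiting $\widetilde{J}$. The only genuine difference is in that last step. The paper writes the limiting equation as $-\Delta _{\infty }J_{\infty }-\frac{4-\lambda _{\infty }}{2}\left\vert A_{\infty }\right\vert ^{2}+\frac{n-\lambda _{\infty }}{2}J_{\infty }^{2}=\chi u_{\infty }^{-\frac{n+4}{n-4}}>0$, drops the nonpositive $\left\vert A_{\infty }\right\vert ^{2}$ term to obtain $-\Delta _{\infty }J_{\infty }+\frac{n-\lambda _{\infty }}{2}J_{\infty }^{2}>0$, and then applies the strong maximum principle: since $J_{\infty }\geq 0$, either $J_{\infty }>0$ or $J_{\infty }\equiv 0$, and the latter contradicts the strict inequality. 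You instead evaluate (\ref{eq2.12}) at a minimum point $x_{0}$ with $\widetilde{J}_{\ast }\left( x_{0}\right) =0$, using $\widetilde{\Delta }_{\ast }\widetilde{J}_{\ast }\left( x_{0}\right) \geq 0$, $\widetilde{J}_{\ast }\left( x_{0}\right) ^{2}=0$, $\sigma _{2}\left( \widetilde{A}_{\ast }\right) \left( x_{0}\right) =-\frac{1}{2}\left\vert \widetilde{A}_{\ast }\left( x_{0}\right) \right\vert ^{2}\leq 0$, $\lambda _{\ast }\leq \lambda _{0}<4$ and $\chi >0$; this is valid (on a closed manifold every minimum is interior, and the reduction to the case of zero minimum uses exactly the inequality $\widetilde{J}_{\ast }\geq 0$ you noted). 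Your device is marginally more elementary, needing only the second-derivative test at a minimum rather than the strong maximum principle dichotomy, while the paper's version isolates a clean differential inequality for $J_{\infty }$ alone; both close the argument equally well.
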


\begin{proof}
Assume $\lambda _{i}\in \Sigma $, $\lambda _{i}\rightarrow \lambda _{\infty
} $ as $i\rightarrow \infty $, $u_{i}$ is a solution to (\ref{eq2.4}) for $%
\lambda =\lambda _{i}$ satisfying $J_{i}>0$, then%
\begin{equation*}
\left\Vert u_{i}\right\Vert _{C^{k}}\leq c\left( k\right) ,
\end{equation*}%
for all $k\in \mathbb{N}$ and%
\begin{equation*}
u_{i}\geq c>0.
\end{equation*}%
After passing to a subsequence we have $u_{i}\rightarrow u_{\infty }$ in $%
C^{\infty }$ and $u_{\infty }\geq c>0$. Denote%
\begin{equation}
g_{i}=u_{i}^{\frac{4}{n-4}}g,\quad g_{\infty }=u_{\infty }^{\frac{4}{n-4}}g.
\label{eq3.55}
\end{equation}%
Then%
\begin{equation*}
Q_{i}-\lambda _{i}\sigma _{2}\left( A_{i}\right) =\chi u_{i}^{-\frac{n+4}{n-4%
}},\quad J_{i}>0.
\end{equation*}%
Let $i\rightarrow \infty $, we get%
\begin{equation}
Q_{\infty }-\lambda _{\infty }\sigma _{2}\left( A_{\infty }\right) =\chi
u_{\infty }^{-\frac{n+4}{n-4}},\quad J_{\infty }\geq 0.  \label{eq3.56}
\end{equation}%
In another way,%
\begin{equation}
-\Delta _{\infty }J_{\infty }-\frac{4-\lambda _{\infty }}{2}\left\vert
A_{\infty }\right\vert _{\infty }^{2}+\frac{n-\lambda _{\infty }}{2}%
J_{\infty }^{2}=\chi u_{\infty }^{-\frac{n+4}{n-4}}>0.  \label{eq3.57}
\end{equation}%
Hence%
\begin{equation}
-\Delta _{\infty }J_{\infty }+\frac{n-\lambda _{\infty }}{2}J_{\infty
}^{2}>0.  \label{eq3.58}
\end{equation}%
Since $J_{\infty }\geq 0$, it follows from strong maximum principle that
either $J_{\infty }>0$ or $J_{\infty }\equiv 0$. The latter case contradicts
with (\ref{eq3.58}). Hence $J_{\infty }>0$ and $\lambda _{\infty }\in \Sigma 
$. It follows that $\Sigma $ is closed.
\end{proof}

\section{Appendix\label{sec4}}

Here we give an elementary proof of Proposition \ref{prop2.1}. Assume $%
Y\left( M,g\right) >0$, we can assume $R>0$. For $1<p<\frac{n+2}{n-2}$,
based on the compact embedding $H^{1}\left( M\right) \subset L^{p+1}\left(
M\right) $ we know there exists a positive smooth function $u$ satisfying%
\begin{equation}
Lu=u^{p}  \label{eq4.1}
\end{equation}%
(see \cite{LP}). Here $L$ is the conformal Laplacian operator (\ref{eq1.2}).
Let%
\begin{equation}
\widetilde{g}=u^{\frac{4}{n-2}}g,  \label{eq4.2}
\end{equation}%
then%
\begin{equation}
\widetilde{R}=u^{-\frac{n+2}{n-2}}Lu=u^{p-\frac{n+2}{n-2}}>0,  \label{eq4.3}
\end{equation}%
hence $\widetilde{J}>0$. Note that%
\begin{equation}
-\widetilde{\Delta }\widetilde{J}+\frac{n-4}{2}\widetilde{J}^{2}=\frac{1}{%
2\left( n-1\right) }\left( -\widetilde{\Delta }\widetilde{R}+\frac{n-4}{%
4\left( n-1\right) }\widetilde{R}^{2}\right) .  \label{eq4.4}
\end{equation}%
Since%
\begin{equation}
\widetilde{\Delta }\varphi =u^{-\frac{4}{n-2}}\Delta \varphi +2u^{-\frac{n+2%
}{n-2}}g\left( \nabla u,\nabla \varphi \right) ,  \label{eq4.5}
\end{equation}
using (\ref{eq4.1}) and (\ref{eq4.3}) we have%
\begin{eqnarray}
&&-\widetilde{\Delta }\widetilde{R}+\frac{n-4}{4\left( n-1\right) }%
\widetilde{R}^{2}  \label{eq4.6} \\
&=&-\left( u^{-\frac{4}{n-2}}\Delta \widetilde{R}+2u^{-\frac{n+2}{n-2}%
}g\left( \nabla u,\nabla \widetilde{R}\right) \right) +\frac{n-4}{4\left(
n-1\right) }\widetilde{R}^{2}  \notag \\
&=&\frac{n-2}{4\left( n-1\right) }\left( p-\frac{6}{n-2}\right) u^{2p-\frac{%
2\left( n+2\right) }{n-2}}+\frac{n-2}{4\left( n-1\right) }\left( \frac{n+2}{%
n-2}-p\right) Ru^{p-\frac{n+6}{n-2}}  \notag \\
&&+\left( \frac{n+2}{n-2}-p\right) \left( p-\frac{4}{n-2}\right) u^{p-\frac{%
3n+2}{n-2}}\left\vert \nabla u\right\vert ^{2}.  \notag
\end{eqnarray}%
We choose $p$ such that%
\begin{equation}
\max \left\{ 1,\frac{6}{n-2}\right\} <p<\frac{n+2}{n-2},  \label{eq4.7}
\end{equation}%
this is possible since $n\geq 5$. Then it follows from (\ref{eq4.4}) and (%
\ref{eq4.6}) that%
\begin{equation}
-\widetilde{\Delta }\widetilde{J}+\frac{n-4}{2}\widetilde{J}^{2}>0.
\label{eq4.8}
\end{equation}

\end{document}